\documentclass[12pt]{amsproc}
\usepackage[utf8]{inputenc}
\usepackage[english]{babel}
\usepackage{amsmath}
\usepackage{amsfonts}
\usepackage{amssymb}
\usepackage{mathrsfs}
\usepackage[pdftex]{color,graphicx}

\newtheorem{theorem}{\bf Theorem}[section]
\newtheorem{lemma}[theorem]{\bf Lemma}

\newcommand{\E}{\mathscr{E}} 

\author[C. Acciarri]{Cristina Acciarri}
\address{Department of Mathematics, University of Brasilia, 70910-900 Bras\'ilia DF, Brazil}
\email{acciarricristina@yahoo.it}

\author[P. Shumyatsky]{Pavel Shumyatsky}
\address{Department of Mathematics, University of Brasilia, 70910-900 Bras\'ilia DF, Brazil}
\email{pavel@unb.br}

\author[D. Silveira ]{Danilo  Silveira}
\address{Department of Mathematics, Federal University of Goias, 75704-020 Catal\~ao GO, Brazil}
\email{sancaodanilo@ufg.br}

\keywords{Finite groups, Automorphisms, Centralizers, Engel condition}
\subjclass[2010]{20D10, 20D45, 20F45}

\thanks{This work was supported by the Conselho Nacional de Desenvolvimento Cient\'{\i}fico e Tecnol\'ogico (CNPq),  and Funda\c c\~ao de Apoio \`a Pesquisa do Distrito Federal (FAPDF), Brazil.}

\title[Engel sinks of fixed points  in finite groups ]{Engel sinks of fixed points \\ in finite groups }

\begin{document}

\begin{abstract} For an element $g$ of a  group $G$, an Engel sink is a subset  $\E(g)$  such that for every $ x\in G $ all sufficiently long commutators $ [x,g,g,\ldots,g] $ belong to $\E(g)$. Let $q$ be a prime, let $m$ be  a positive integer and $A$ an elementary abelian group of order $q^2$ acting coprimely on a finite group $G$. We show that if  for each nontrivial element  $a$  in $ A$  and every element $g\in C_{G}(a)$ the cardinality of the smallest Engel sink $\E(g)$ is at most $m$, then 
the order of $\gamma_\infty(G)$ is bounded in terms of $m$ only.  Moreover we  prove that  if  for each $a\in A\setminus \{1\}$ and  every element $g\in C_{G}(a)$, the smallest Engel sink $\E(g)$ generates a subgroup of rank at most $m$, then 
the rank of $\gamma_\infty(G)$ is bounded in terms of $m$ and $q$ only.

\end{abstract}

\maketitle

\section{Introduction}

A group $G$ is called an \emph{Engel group} if for every $x,g\in G$ the equation $[x,g,g,\dots , g]=1$ holds, where $g$ is repeated in the commutator sufficiently many times depending on $x$ and $g$. (Throughout the paper, we use the left-normed simple commutator notation
$[a_1,a_2,a_3,\dots ,a_r]=[...[[a_1,a_2],a_3],\dots ,a_r]$.)
The classical theorem of Zorn states that a finite Engel group is nilpotent \cite{Zorn}.  In a number of recent papers groups that are `almost Engel' in the sense of restrictions on so-called Engel sinks were considered \cite{KS-2016,KS-2018,KS-rank}. An \emph{Engel sink} of an element $g\in G$ is a set $\E(g)$ such that for every $x\in G$ all sufficiently long commutators $[x,g,g,\dots ,g]$ belong to $\E(g)$, that is, for every $x\in G$ there is a positive integer $n(x,g)$ such that
 $$[x,\underbrace{g,g,\dots ,g}_n]\in \E(g)\qquad \text{for all }n\geq n(x,g).
 $$
Engel groups are precisely the groups for which we can choose $\E(g)=\{ 1\}$ for all $g\in G$. In \cite{KS-2018} finite, profinite, and compact groups in which every element has a finite Engel sink were considered. It was shown that compact groups with this property are finite-by-(locally nilpotent). Results for finite groups have to be of quantitative nature. Obviously, in a finite group every element has the smallest Engel sink, so from now on we use the term Engel sink for the minimal Engel sink of $g$, denoted by $\E(g)$, thus eliminating ambiguity in this notation. One of the results obtained in \cite{KS-2018} says that if $G$ is a finite group and there is a positive integer $m$ such that $|\E(g)|\leq m$ for all $g\in G$, then $G$ has a normal subgroup of order bounded in terms of $m$ with nilpotent quotient. A subsequent paper \cite{KS-rank} dealt with finite groups in which there is a bound on the ranks of the subgroups generated by the Engel sinks. Here, the \textit{rank} of a finite group is the minimum number $r$ such that every subgroup can be generated by $r$ elements. Let $\gamma_\infty(G)=\cap_{i=1}^{\infty} \gamma_i(G)$ be the intersection of all terms of the lower central series of a group $G$ (the nilpotent residual of $G$).
 The following  results were obtained  in \cite[Theorem 3.1]{KS-2018} and  \cite[Theorem 1.1]{KS-rank}, respectively.

\begin{theorem}\label{KStheorems}
Let $ G $ be a finite group and $m$ a positive integer.
\begin{itemize}
\item[(1)] If $\E(g)$ has at most $m$ elements for every $g\in G$, then the order of $\gamma_\infty(G)$ is bounded in terms of $m$ only.
\item[(2)] If $\langle\E(g)\rangle$ has rank at most $m$ for every $g\in G$, then the rank of $\gamma_\infty(G)$ is bounded in terms of $m$ only.
	\end{itemize}
\end{theorem}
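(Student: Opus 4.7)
The plan is to attack both parts with a common three-stage strategy: first reduce to the soluble case, then bound the Fitting length of $\gamma_\infty(G)$, and finally bound the order (resp.\ rank) of each Fitting quotient. In the Engel-sink setting, part (1) is a quantitative version of Zorn's theorem (the case $m=1$ already gives $\gamma_\infty(G)=1$) and part (2) is its rank analogue, so any proof should follow this trichotomy. A small preliminary remark that I would use throughout is that Engel sinks are inherited by quotients and by $g$-invariant sections: for a section $M/N$ of $G$ and $g\in M$, the image of $\E(g)$ in $M/N$ contains an Engel sink for $gN$ and so has at most $m$ elements (or rank at most $m$).

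For the first stage I would show that every non-abelian composition factor $S$ of $G$ has order (resp.\ rank) bounded in terms of $m$. Using CFSG I would exhibit, in any sufficiently large non-abelian finite simple group, an element whose minimal Engel sink exceeds the given bound; together with Schreier's conjecture bounding $|\mathrm{Out}(S)|$, this forces the non-soluble radical of $G$ to be small, and after absorbing it into the final estimate I may assume $G$ is soluble. For the second stage, working with the Fitting series $1=F_0<F_1<\cdots<F_\ell=\gamma_\infty(G)$, the goal is to bound $\ell$. The mechanism is that if $\ell$ is large, one can construct a coprime pair consisting of a $p$-element $g$ in a Hall complement above some $F_i$ and a test element $x$ in a $g$-invariant $q$-Sylow of $F_{i+1}/F_i$ such that the iterated commutators $[x,g,g,\dots,g]$ produce many distinct (resp.\ rank-enlarging) elements of $\E(g)$ before stabilising, contradicting the hypothesis once $\ell$ exceeds a function of $m$.

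For the third stage, once $\ell$ is bounded I would bound each Fitting factor individually by a Neumann-type covering argument: in a chief factor $V$ of $G$ acted upon by an element $g$ of a complement, the centraliser $C_V(g)$ must be small, for otherwise the commutator map $x\mapsto[x,g]$ on $V$ would produce many distinct eventual values in $\E(g)$. Covering $V$ by translates of $C_V(g)$ bounds $|V|$, and iterating up the Fitting series gives the desired bound on $|\gamma_\infty(G)|$. For part (2) the same strategy is run with orders replaced by ranks, using Lubotzky--Mann theory to pass between rank estimates on $p$-sections and elementary abelian chief factors.

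The hardest step, I expect, will be the Fitting-length bound: one must track how Engel sequences propagate across successive layers of the Fitting series and exhibit, at each extra layer, a uniform new contribution to $\E(g)$. This is especially delicate in part (2), because a subgroup $\langle\E(g)\rangle$ of rank at most $m$ can contain arbitrarily many elements, so one cannot count elements of $\E(g)$ directly; instead one must argue in terms of invariants of $\langle\E(g)\rangle$ such as its composition length or the number of independent elementary abelian sections it contains, and show that each new Fitting layer genuinely enlarges one of these invariants.
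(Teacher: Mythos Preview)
The paper does not contain a proof of this theorem; it is quoted from \cite{KS-2018} and \cite{KS-rank} as a known result and then used as a black box in the proofs of Theorems~\ref{q2finiteorder} and~\ref{q2finiteposto}. So there is no ``paper's own proof'' to compare against. The only hints the paper gives toward the original arguments are the auxiliary Lemmas~\ref{[P,g]}, \ref{|[U,V]|} and~\ref{gama_inf}.

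That said, your outline has a genuine gap in its third stage. You claim that in a chief factor $V$ the centraliser $C_V(g)$ must be small, ``for otherwise the commutator map $x\mapsto[x,g]$ would produce many distinct eventual values in $\E(g)$'', and then you want to cover $V$ by translates of $C_V(g)$ to bound $|V|$. This is backward: a large $C_V(g)$ means the map $x\mapsto[x,g]$ has large kernel and hence \emph{small} image, not large. What the Engel-sink hypothesis actually gives (this is exactly Lemma~\ref{[P,g]}) is a bound on $|[V,g]|$, and there is no way to bound $|C_V(g)|$ or $|V|$ from the hypothesis. Nor is that needed: one is bounding $|\gamma_\infty(G)|$, not $|F(G)|$, and by Lemma~\ref{gama_inf} the former is a product of subgroups of the form $[F_p,H_{p'}]$, which are controlled directly by the $[P,g]$-bounds together with Lemma~\ref{|[U,V]|}. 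Your Neumann-style covering argument is aimed at the wrong target.

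Your second stage is also not yet an argument. Saying that iterated commutators ``produce many distinct elements of $\E(g)$ before stabilising'' conflates transient values of $[x,g,\dots,g]$ with eventual ones; only the latter lie in $\E(g)$, so long Fitting series do not inflate $\E(g)$ in the way you describe. The mechanism in \cite{KS-2018,KS-rank} is different: the bound on $|[P,g]|$ for every coprime pair bounds, via Lemma~\ref{|[U,V]|}, the order (resp.\ rank) of the automorphism group that any $p'$-section can induce on any $p$-section, and it is this that caps the Fitting height.
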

As usual, $\langle X\rangle$ denotes the subgroup generated by a subset $X$ of $G$. In the present article we consider finite groups $G$ admitting a coprime action by an elementary abelian $q$-group $A$ with certain conditions on Engel sinks of elements of $C_G(a)$ for $a\in A^\#$. Here $q$ is a prime, $C_G(a)$ denotes the subgroup of $G$ formed by the fixed points of the automorphism $a$, while the symbol $A^{\#}$ stands for the set of nontrivial elements of $A$. Recall that an action of $A$ on $G$ is called coprime if $(|A|,|G|)=1$.  Our purpose is to establish the following theorems.

\begin{theorem}\label{q2finiteorder} Let $m$ be a positive integer, $q$ a prime, and $A$ an elementary abelian group of order $q^2$ acting coprimely  on a finite group $G$. If $\E(g)$ has at most $m$ elements for  every $g\in C_G(a)$ and every $a\in A^{\#}$, then the order of $\gamma_\infty(G)$ is bounded in terms of $m$ only.
\end{theorem}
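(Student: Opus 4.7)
The strategy is to bootstrap from Theorem~\ref{KStheorems}(1) via the coprime action of the noncyclic abelian group $A$. Since the hypothesis on Engel sinks is inherited by each fixed-point subgroup $C_G(a)$ for $a \in A^{\#}$, Theorem~\ref{KStheorems}(1) immediately yields a bound $|\gamma_\infty(C_G(a))| \leq f(m)$ depending only on $m$. The remaining and essential task is to translate this family of local bounds into a global bound on $|\gamma_\infty(G)|$ depending only on $m$ and not on $q$.

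The natural framework is induction on $|G|$. The subgroup $\gamma_\infty(G)$ is $A$-invariant, and by coprimality $C_{G/N}(a) = C_G(a)N/N$ for every $A$-invariant normal subgroup $N$, so the hypotheses pass to $G/N$. Choosing $N$ minimal $A$-invariant normal in $G$, induction bounds $|\gamma_\infty(G)/(\gamma_\infty(G) \cap N)|$, and it suffices to bound $|\gamma_\infty(G) \cap N|$. When $N$ is a nonabelian minimal normal subgroup, the goal is to show that the smallness of Engel sinks in centralizers forces the nonabelian simple composition factors of $N$ to lie in an $m$-bounded family; this should follow the pattern used in the proof of Theorem~\ref{KStheorems}(1), combined with standard techniques for coprime actions of noncyclic abelian groups.

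When $N$ is elementary abelian of exponent $p\neq q$, the analysis becomes module-theoretic. Here one invokes the classical fact that if a noncyclic abelian $q$-group $A$ acts coprimely on a $q'$-group $H$ with $[H,A]=H$, then $H = \langle C_H(a) \mid a\in A^{\#}\rangle$. Applied to the relevant $A$-invariant sections of $\gamma_\infty(G)\cap N$, this reduces bounding $|\gamma_\infty(G)\cap N|$ to information already controlled by $C_{\gamma_\infty(G)\cap N}(a)\leq \gamma_\infty(C_G(a))$. A Fitting decomposition of $N$ under $A$, together with a Clifford-style analysis of the $AG$-module structure on $N$, should then close the argument.

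The main obstacle, and the subtlest feature of the statement, is ensuring that the final bound depends only on $m$ and not on $q$. A naive count over $A^{\#}$ tends to produce estimates of the form $f(m)^{q^2-1}$, which is $q$-dependent. Removing this dependence requires a finer argument: inside each chief factor of $\gamma_\infty(G)$, the smallness of Engel sinks must force substantial overlap among the contributions of the various $a\in A^{\#}$, so that the aggregated bound collapses to one in terms of $m$ alone. This $q$-independence is what distinguishes Theorem~\ref{q2finiteorder} from any statement that would follow by a direct iteration of Theorem~\ref{KStheorems}(1), and I expect it to be the technical heart of the proof.
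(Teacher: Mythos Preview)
Your sketch identifies the right starting point---bounding $|\gamma_\infty(C_G(a))|$ via Theorem~\ref{KStheorems}(1)---but the inductive architecture you propose does not close. Passing to $G/N$ for a minimal $A$-invariant normal $N$ and invoking induction on $|G|$ gives only $|\gamma_\infty(G)|\le B(m)\cdot|\gamma_\infty(G)\cap N|$; without an \emph{a priori} bound on the number of such steps this multiplies unboundedly and never produces a single function of~$m$. The paper avoids this by first bounding a structural invariant and then inducting on it. In the soluble case, the $m$-bound on $|\gamma_\infty(C_G(a))|$ bounds $h(C_G(a))$, and Thompson's theorem (Theorem~\ref{Turull}) bounds $h(G)$; one then inducts on $h(G)$, with the substantive work occurring at $h(G)=2$ via Lemma~\ref{gama_inf}. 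In the nonsoluble case the analogous role is played by the nonsoluble length, bounded through Theorem~\ref{KS-nonsoluble}. Note also that the containment $C_{\gamma_\infty(G)\cap N}(a)\le\gamma_\infty(C_G(a))$ you invoke is not valid in general (if $C_G(a)$ happens to be nilpotent the right-hand side is trivial); the paper instead uses the correct coprime-commutator identity $C_{[F_p,\overline H_{p'}]}(a)\le[F_p,C_{\overline H_{p'}}(a)]$ inside the $h(G)=2$ analysis.

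More importantly, you correctly flag $q$-independence as the crux but do not supply the mechanism, and a Clifford-style count over $A^{\#}$ will indeed only yield bounds of the shape $f(m)^{q+1}$. The paper's device is the elementary but decisive Lemma~\ref{mbounded}: if $|C_G(a)|\le m$ for every $a\in A^{\#}$, then $|G|$ is $m$-bounded \emph{independently of $q$}, because either $q\le m!$ (and then $G=\prod_{a}C_G(a)$ has at most $m^{m!+1}$ elements), or $q>m!$ and no automorphism of any $C_G(a)$ can have order $q$, forcing all the $C_G(a)$ to coincide with $C_G(A)$ and hence $|G|\le m$. This lemma is what converts the per-centralizer bounds on $[F_p,C_{\overline H_{p'}}(a)]$ into an $m$-bound on $[F_p,H_{p'}]$. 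A parallel dichotomy---either $q\le M!$ or $A$ acts trivially on the semisimple part---drives the direct-product-of-simples case (Lemma~\ref{order-simpleCase}). These two ``$q$ small or $A$ trivialises'' arguments are precisely the missing technical heart you anticipated; without them the strategy you outline cannot reach a bound in terms of $m$ alone.
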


\begin{theorem}\label{q2finiteposto} Let $m$ be a positive integer, $q$ a prime, and $A$ an elementary abelian group of order $q^2$ acting coprimely  on a finite group $G$. If $\langle\E(g)\rangle$ has rank at most $m$ for every $g\in C_{G}(a)$ and every $a\in A^{\#}$, then the rank of $\gamma_\infty(G)$ is bounded in terms of $m$ and $q$ only.
\end{theorem}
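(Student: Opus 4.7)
My plan is to combine a pointwise application of Theorem \ref{KStheorems}(2) inside each fixed-point subgroup $C_G(a)$ with the structure theorems available for coprime actions by elementary abelian groups of rank two. The first observation is that for each $a\in A^{\#}$ the subgroup $H_{a}:=C_G(a)$ inherits the hypothesis of Theorem \ref{KStheorems}(2): if $g\in H_{a}$, then the Engel sink of $g$ computed inside $H_{a}$ is contained in $\E(g)$, so the subgroup it generates still has rank at most $m$. Applying Theorem \ref{KStheorems}(2) inside $H_{a}$ therefore produces a function $f$, depending only on $m$, with $\mathrm{rank}(\gamma_\infty(H_{a}))\leq f(m)$ for every $a\in A^{\#}$. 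This is the quantitative input that has to be propagated to $\gamma_\infty(G)$.

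Next I would reduce to an $A$-invariant Sylow $p$-subgroup. Standard coprime action arguments allow us to assume $G=\gamma_\infty(G)$, and bounding $\mathrm{rank}(G)$ for a finite soluble group is done prime by prime by controlling the rank of an $A$-invariant Sylow $p$-subgroup $P$ of $G$. For coprime actions by elementary abelian groups of rank at least two there are rank bounds, in the family used in \cite{KS-rank}, showing that $\mathrm{rank}(P)$ is controlled by $q$ together with $\max_{a\in A^{\#}}\mathrm{rank}(C_P(a))$; in the rank-two case one can in addition exploit the decomposition $P=\langle C_P(B):B<A,\ [A:B]=q\rangle$. The task is thus reduced to a uniform bound on $\mathrm{rank}(C_P(a))$ for each $a\in A^{\#}$.

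The main obstacle, and what I expect to be the technical heart of the proof, is bridging the two steps above: Step~1 bounds $\mathrm{rank}(\gamma_\infty(H_{a}))$, but we need a bound on $\mathrm{rank}(C_P(a))=\mathrm{rank}(P\cap H_{a})$, and $H_{a}$ need not sit inside $\gamma_\infty(G)$. The idea is to show that, after the reductions of Step~2, $C_P(a)$ lies in $\gamma_\infty(H_{a})$ up to a subgroup whose rank is already controlled. I would prove this by induction on the Fitting height of $G$: at each Fitting layer the coprime rank-two action, together with Theorem \ref{KStheorems}(1) used to absorb small sporadic pieces of bounded order, should force the bulk of $C_P(a)$ into $\gamma_\infty(H_{a})$. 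Combining this bridge with Step~2 would yield the final rank bound in terms of $m$ and $q$.
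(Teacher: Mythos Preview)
Your plan has two genuine gaps that I do not see how to close along the lines you suggest.

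\textbf{The reduction ``assume $G=\gamma_\infty(G)$'' is not available.}  The operator $\gamma_\infty$ is not idempotent: already for $G=S_3$ one has $\gamma_\infty(G)=A_3$ and $\gamma_\infty(A_3)=1$.  In particular, for a nontrivial soluble group $G$ one always has $\gamma_\infty(G)<G$, so your reduction would collapse the soluble case to the trivial group while leaving the quantity $\mathrm{rank}(\gamma_\infty(G))$ untouched.  You therefore cannot transfer the problem to an $A$-invariant Sylow $p$-subgroup of $G$ in the way you describe.

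\textbf{The ``bridge'' in Step 3 does not exist.}  Even in the simplest situation $G=P\rtimes Q$ with $P$ a normal $p$-group, $Q$ a $p'$-group, and $h(G)=2$, one has (for $a\in A^{\#}$) $C_G(a)=C_P(a)\rtimes C_Q(a)$ and hence $\gamma_\infty(C_G(a))=[C_P(a),C_Q(a)]$.  There is no reason whatsoever for $C_P(a)$ to be contained in $[C_P(a),C_Q(a)]$ up to bounded rank; the discrepancy can be as large as you like while the Engel-sink hypothesis holds.  So bounding $\mathrm{rank}(\gamma_\infty(C_G(a)))$ gives no control on $\mathrm{rank}(C_P(a))$, and the scheme ``bound $C_P(a)$, then bound $P$'' breaks down.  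Your proposal also says nothing about the nonsoluble case, which requires separate work.

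The paper proceeds quite differently.  The crucial device is Lemma~\ref{[P,g]}(2): for a $p'$-element $g$ normalizing a $p$-group $P$ one has $[P,g]\le\langle\E(g)\rangle$, so the hypothesis bounds $\mathrm{rank}([P,g])$ directly, with no detour through $\gamma_\infty(C_G(a))$.  In the soluble case one first bounds $h(G)$ via Thompson--Turull (Theorem~\ref{Turull}), then inducts on $h(G)$; at height $2$ one writes $\gamma_\infty(G)=\prod_p[F_p,H_{p'}]$ (Lemma~\ref{gama_inf}), uses Lemma~\ref{|[U,V]|}(2) to bound $\mathrm{rank}([F_p,C_{H_{p'}}(a)])$, and then the generation $H_{p'}=\langle C_{H_{p'}}(a):a\in A^{\#}\rangle$ together with Lemma~\ref{fact1} gives $[F_p,H_{p'}]=\prod_{a}[F_p,C_{H_{p'}}(a)]$, a product of $q+1$ normal subgroups of bounded rank.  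The nonsoluble case is handled by bounding the nonsoluble length $\lambda(G)$ (Theorem~\ref{KS-nonsoluble}), analysing direct products of simple groups (Lemma~\ref{rk-simpleCase}), and reducing the $\lambda(G)=1$ case back to the soluble one via Lemma~\ref{simple2generator}.
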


The surprising aspect of Theorem \ref{q2finiteorder} is that the order of $\gamma_\infty(G)$ turns out to be independent of the order of $A$. We do not know if a similar phenomenon also holds under the hypotheses of Theorem \ref{q2finiteposto}. Quite possibly, Theorem \ref{q2finiteposto} can be strengthened by showing that the rank of $\gamma_\infty(G)$ is bounded in terms of $m$ only. This seems an interesting question for future projects.

Formally, the proofs of Theorems \ref{q2finiteorder} and \ref{q2finiteposto} follow similar plans but in fact there are significant differences. In particular, the proof of Theorem \ref{q2finiteorder} uses the observation that if a finite group $G$ admits a coprime action by an elementary abelian $q$-group $A$ such that $C_G(a)$ has order at most $m$ for each $a\in A^{\#}$, then the order of $G$ is bounded in terms of $m$ only and is independent of the order of $A$ (see Lemma \ref{mbounded} in Section 3). We think that there is no analog of that observation for the case where $C_G(a)$ has rank at most $m$ for each $a\in A^{\#}$. This partially explains why in Theorem \ref{q2finiteposto} the rank of $\gamma_\infty(G)$ is bounded in terms of both $m$ and $q$.

Note that both parts of Theorem \ref{KStheorems} can be obtained as particular cases of Theorems \ref{q2finiteorder} and \ref{q2finiteposto}, respectively, where the action of $A$ on $G$ is trivial. On the other hand, Theorem \ref{KStheorems} is used in the proof of both Theorems \ref{q2finiteorder} and \ref{q2finiteposto}. 

In the next section we include some (mostly well-known) auxiliary lemmas. The proof of Theorem \ref{q2finiteorder} is given in Section 3. The proof of Theorem \ref{q2finiteposto} is given in Section 4.


   \section{Preliminaries}  
Throughout the paper we use, without special references, the  following well-known properties of coprime actions (see for example \cite[5.3.6,  6.2,2, 6.2.4]{DG} and \cite[Theorem 8.2.6]{KS-book}).  Let $A$ be a group  acting coprimely on a finite group $G$. We will use the usual notation for commutators $ [g,a]=g^{-1}g^a $ and $ [G,A]=\langle [g,a] | g\in G, a\in A\rangle$. Then:  
\vspace{5pt}
	
\noindent $ \bullet$   $[G,A]=[G,A,A] $; \\
\noindent $ \bullet$  If $N$ is  an $A$-invariant normal subgroup of $ G $, then  $C_{G/N}(A)=C_G(A)N/N$; \\
\noindent $ \bullet$   If  $A$ is a noncyclic abelian group, then $G$ is generated by the subgroups $C_G(B)$, where $A/B$ is cyclic; \\
\noindent $ \bullet$   $G$ has an $A$-invariant Sylow $p$-subgroup for each prime $p$ dividing $|G|$. If $G$ is soluble, then there exist $A$-invariant Hall $\pi$-subgroups of $G$, for any  nonempty  subset $\pi$ of prime divisors of $|G|$.

In what follows, let ${\bf r}(G)$ denote the rank of a finite group $G$, and we use the expression ``$\{a,b,\dots\}$-bounded'' to abbreviate the expression ``bounded from above by a function that depends on  $a,b,\dots$ only''.

\vspace{2pt}		 
We will  also require the following well-known facts.

\begin{lemma}\label{fact1}
	Let $N,H_1,\ldots,H_t$ be subgroups of a group $G$ with $N$ being normal. If $K=\langle H_1,\ldots, H_t\rangle$, then $[N,K]=[N,H_1]\cdots [N,H_t]$.
\end{lemma}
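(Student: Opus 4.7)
The plan is to prove both inclusions of $[N,K]=[N,H_1]\cdots[N,H_t]$ using the commutator identity $[a,bc]=[a,c][a,b]^c$. The inclusion $[N,H_1]\cdots[N,H_t]\subseteq [N,K]$ is immediate: since each $H_i\leq K$, one has $[N,H_i]\leq [N,K]$, and the latter is a subgroup.

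For the reverse inclusion, set $M=[N,H_1]\cdots[N,H_t]$. The key step is to show that $M$ is normalized by $K$, so that in particular $M$ is itself a subgroup and the order of the factors in its definition is immaterial. To establish this, I would take $x\in M\leq N$ and $h\in H_j$ and note that $x^h=x[x,h]$, where $[x,h]\in [M,H_j]\subseteq [N,H_j]\subseteq M$, so $x^h\in M$. Iterating over the generators $H_1,\ldots,H_t$ of $K$ gives the desired $K$-invariance of $M$.

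Once $K$-invariance is in hand, I would fix $n\in N$ and an arbitrary $k\in K$, write $k=k_1k_2\cdots k_s$ with $k_j\in H_{i_j}$, and apply the identity $[a,bc]=[a,c][a,b]^c$ iteratively to obtain
$$[n,k]=[n,k_s]\cdot[n,k_{s-1}]^{k_s}\cdots[n,k_1]^{k_2\cdots k_s}.$$
Each factor is a $K$-conjugate of an element of some $[N,H_{i_j}]\subseteq M$, and hence lies in $M$ by the $K$-invariance just established. Therefore $[n,k]\in M$, and since $[N,K]$ is generated by commutators of this form, we conclude $[N,K]\subseteq M$.

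The main obstacle I anticipate is the $K$-invariance step itself: individually the subgroups $[N,H_i]$ need not be stable under conjugation by $H_j$ for $j\neq i$, so one has to argue at the level of the joint product $M$. The reason this works is that the deviation $[x,h]$ produced by conjugation of $x\in M$ by $h\in H_j$ always lies in $[N,H_j]$, which is one of the factors defining $M$; everything else is a straightforward unwinding of the commutator identity.
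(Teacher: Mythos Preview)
The paper does not supply a proof of this lemma; it is listed among the ``well-known facts'' and left unproved. Your argument is the standard one and is correct in substance, but there is a small circularity in the way you present it. You write that establishing $K$-invariance of $M$ will ``in particular'' show that $M$ is a subgroup; however, your $K$-invariance step itself uses that $M$ is closed under multiplication (to pass from $x\in M$ and $[x,h]\in M$ to $x^h=x[x,h]\in M$). The clean fix is to note \emph{first} that each $[N,H_i]$ is normalized by $N$ (from the identity $[n',h]^{n}=[n'n,h][n,h]^{-1}$), and since $[N,H_i]\leq N$ by normality of $N$ in $G$, each $[N,H_i]$ is normal in $N$. Hence $M$ is a product of normal subgroups of $N$ and is therefore already a subgroup of $N$, independently of any $K$-invariance. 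With that in hand, your $K$-invariance argument and the subsequent expansion of $[n,k_1\cdots k_s]$ via $[a,bc]=[a,c][a,b]^{c}$ go through exactly as you describe.
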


For the proof of the next lemma see, for example, \cite[Lemma 2.6]{KS-rank}.
\begin{lemma}\label{fact2}
	Suppose that a group $A$ acts by automorphisms on a group $G$. If $A=\langle a_1,\ldots,a_t\rangle$, then $[G,A]=[G,a_1]\cdots[G,a_t].$
\end{lemma}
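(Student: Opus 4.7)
The plan is to deduce the lemma from Lemma~\ref{fact1}. Regard $G$ as a normal subgroup of the natural semidirect product $\Gamma = G \rtimes A$, so that each cyclic subgroup $\langle a_i\rangle$ sits inside $\Gamma$. Since $A = \langle a_1,\ldots,a_t\rangle = \langle \langle a_1\rangle,\ldots,\langle a_t\rangle\rangle$, Lemma~\ref{fact1} applied with $N=G$ and $H_i = \langle a_i\rangle$ immediately yields the factorisation
$$[G,A] = [G,\langle a_1\rangle]\,[G,\langle a_2\rangle]\cdots [G,\langle a_t\rangle].$$

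It therefore suffices to show that $[G,\langle a_i\rangle] = [G,a_i]$ for each $i$. The inclusion $[G,a_i] \subseteq [G,\langle a_i\rangle]$ is clear. For the reverse inclusion I would first observe that $[G,a_i]$ is $\langle a_i\rangle$-invariant: each generator $[g,a_i]$ satisfies $[g,a_i]^{a_i} = [g^{a_i},a_i] \in [G,a_i]$. Combining this with the standard commutator identity
$$[g,a_i^n] = \prod_{j=0}^{n-1}[g,a_i]^{a_i^{j}},$$
proved by induction on $n\geq 0$ via $[x,yz] = [x,z][x,y]^{z}$ (with negative exponents handled by the symmetric identity after inverting), one obtains $[g,a_i^k]\in [G,a_i]$ for every integer $k$, so every generator of $[G,\langle a_i\rangle]$ already lies in $[G,a_i]$.

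The only step requiring any care is the verification that $[G,\langle a_i\rangle]=[G,a_i]$; once this is in hand, Lemma~\ref{fact1} does the rest of the work. I do not anticipate serious obstacles: the identities involved are elementary commutator calculus, and in fact the same technique could be used to prove Lemma~\ref{fact2} from scratch by induction on $t$, bypassing Lemma~\ref{fact1} altogether.
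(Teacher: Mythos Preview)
Your argument is correct. The paper does not actually prove this lemma but only cites \cite[Lemma~2.6]{KS-rank}, so there is no in-paper proof to compare against; your deduction from Lemma~\ref{fact1} via the semidirect product $G\rtimes A$, together with the routine verification that $[G,\langle a_i\rangle]=[G,a_i]$ using the identity $[x,yz]=[x,z][x,y]^z$ and the $\langle a_i\rangle$-invariance of $[G,a_i]$, is a standard and complete way to obtain the result.
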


The next  results relate Engel sinks in a finite group to coprime actions. They were established in \cite[Lemma 3.2]{KS-2018} and \cite[Lemma 2.7]{KS-rank}, respectively.
\begin{lemma}\label{[P,g]}
Let  $P$ be a finite $p$-subgroup of  a group $G$, and $g\in G$ a $p'$-element normalizing $P$. 
Then 
\begin{itemize}
\item[(1)] the order of $[P,g]$ is bounded in terms of the cardinality of the Engel sink $\E(g)$;
\item[(2)] the subgroup $[P, g]$ is contained in   $\langle \E(g)\rangle$.
\end{itemize}
\end{lemma}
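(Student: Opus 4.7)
The plan is to show directly that $[P,g] \subseteq \E(g)$, from which both conclusions follow immediately: part~(1) gives $|[P,g]| \leq |\E(g)|$, and part~(2) is then obvious. The key idea is to exploit the coprimality of the action of $\langle g\rangle$ on $P$ together with the finiteness of $P$.

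First I would observe that since $P$ is a finite $p$-group and $g$ is a $p'$-element normalizing $P$, the cyclic group $\langle g\rangle$ acts coprimely on $P$. The standard coprime-action identity $[P,g] = [P,g,g]$ then says that the set map
\[
\phi \colon [P,g] \to [P,g], \qquad \phi(x) = [x,g],
\]
is surjective. Since $[P,g]$ is finite, $\phi$ is a bijection of $[P,g]$ onto itself, and hence so is every iterate $\phi^n$. Note that for any $y \in P$ we have $\phi^n(y) = [y,\underbrace{g,\dots,g}_{n}]$, so $\phi^n$ encodes the long commutators appearing in the definition of the Engel sink.

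Next I would use the definition of $\E(g)$ applied to each $y \in [P,g]$: there is an integer $N(y)$ such that $\phi^n(y) \in \E(g)$ for every $n \geq N(y)$. Because $[P,g]$ is finite, the maximum $N = \max_{y \in [P,g]} N(y)$ is well defined, and
\[
\phi^N\bigl([P,g]\bigr) \subseteq \E(g).
\]
But $\phi$ is a bijection of $[P,g]$, so $\phi^N([P,g]) = [P,g]$. Therefore $[P,g] \subseteq \E(g)$, which yields both statements at once.

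There is no serious obstacle here; the only subtle point is to recognize that in the coprime setting the commutator map $x\mapsto [x,g]$ is not merely surjective onto $[P,g]$ but injective as well once we restrict to $[P,g]$, and this is precisely what lets us conclude that the sink already contains the whole commutator subgroup rather than just an image of it.
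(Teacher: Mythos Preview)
Your argument has a genuine gap at the step where you claim that the coprime identity $[P,g]=[P,g,g]$ makes the set map $\phi\colon [P,g]\to[P,g]$, $\phi(x)=[x,g]$, surjective. The symbol $[P,g,g]=[[P,g],g]$ denotes the \emph{subgroup generated by} $\{[x,g]:x\in[P,g]\}$, not that set itself; so $[P,g]=[P,g,g]$ only tells you that $\phi([P,g])$ generates $[P,g]$, not that it equals it. In fact the stronger claim $[P,g]\subseteq\E(g)$ that you aim for is false in general. Take $P$ extraspecial of order~$27$ and exponent~$3$, with generators $a,b$ and centre $\langle z\rangle$, and let $g$ be the automorphism of order~$2$ sending $a\mapsto a^{-1}$, $b\mapsto b^{-1}$ (so $z\mapsto z$). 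Then $[P,g]=P$, but a direct calculation gives $[a^ib^jz^k,g]=a^ib^jz^{ij}$, so $\phi$ is idempotent with image of size~$9$. In $G=P\rtimes\langle g\rangle$ one checks $\E(g)=\phi(P)$, hence $|\E(g)|=9<27=|[P,g]|$, contradicting your conclusion $|[P,g]|\le|\E(g)|$.

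What survives is this: for large $N$ the set $\phi^N([P,g])$ lies in $\E(g)$, and because $g$ acts fixed-point-freely on the Frattini quotient $[P,g]/\Phi([P,g])$ the induced map there is a genuine bijection, so $\phi^N([P,g])$ still maps onto $[P,g]/\Phi([P,g])$ and hence generates $[P,g]$. That yields part~(2) and a bound on the number of generators of $[P,g]$; bounding the \emph{order} of $[P,g]$ in terms of $|\E(g)|$ (part~(1)) requires additional work, and one really only obtains a function of $|\E(g)|$, not the inequality $|[P,g]|\le|\E(g)|$. The paper itself does not give a proof here but cites \cite{KS-2018} and \cite{KS-rank}, where the arguments proceed along these refined lines.
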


In the next  lemma we collect two useful results. 
\begin{lemma}\label{|[U,V]|}
	Let  $p$ be a prime, $P $ a finite $p$-group, and $ A$ a $ p' $-group of automorphisms of $ P$.
\begin{itemize}
	\item[(1)] If $|[P,a]|\leq m$ for every $a\in A$, then $|A|$ and $|[P,A]|$ are $m$-bounded.
	\item[(2)] If {\bf r}$([P,a])\leq m$ for every $ a\in A $, then {\bf r}$ (A) $ and {\bf r}$ ([P,A]) $ are $ m $-bounded.
\end{itemize}
\end{lemma}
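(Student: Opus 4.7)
The plan is to handle both parts in parallel, following the same template of structural reductions followed by a dimensional bound.

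The first half is a sequence of standard coprime-action reductions. Since $A$ is a $p'$-group of automorphisms of the finite $p$-group $P$, $A$ acts faithfully on the Frattini quotient $P/\Phi(P)$, and the image of $[P,a]$ under the natural projection equals $[P/\Phi(P),a]$, whose order (resp.\ rank) is at most that of $[P,a]$. Thus I may replace $P$ by $P/\Phi(P)$ and assume $P$ is elementary abelian, i.e., an $\mathbb{F}_p$-vector space. By the coprime-action decomposition of an abelian group, $P=C_P(A)\oplus[P,A]$; the subgroup $[P,A]$ is $A$-invariant, $A$ acts faithfully on it, and $[[P,A],a]\subseteq[P,a]$, so the hypothesis is inherited. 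Replacing $P$ by $[P,A]$, I further assume $P=[P,A]$ with $C_P(A)=0$ and $A\hookrightarrow GL(P)$. In part (1), faithfulness forces $[P,a]\neq 0$ for every nontrivial $a\in A$, hence $p\leq|[P,a]|\leq m$, so the prime $p$ is itself $m$-bounded.

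The central step, which I regard as the principal obstacle, is to bound $n:=\dim_{\mathbb{F}_p}P$ in terms of $m$. By Lemmas \ref{fact1} and \ref{fact2}, any finitely generated subgroup $H=\langle a_1,\ldots,a_s\rangle$ of $A$ satisfies $[P,H]=\sum_{i=1}^{s}[P,a_i]$, each summand of dimension at most $\log_p m$ in part (1) and at most $m$ in part (2). A greedy choice of $a_1,a_2,\ldots$ yields a strictly ascending chain of subspaces terminating at $[P,A]=P$, but the naive greedy bound on its length is only $n$ itself. To genuinely bound $n$, I would decompose $P$ into isotypic $A$-components using semisimplicity of the coprime $A$-action, and analyze each irreducible constituent $W$: the $A$-orbit of $[W,a]$ for any nontrivial $a$ must span $W$ by irreducibility and faithfulness, which together with the bound on $\dim[W,a]$ and Clifford-theoretic control of stabilizers forces $\dim W$ bounded; a further argument then limits the number of isotypic components appearing in $P$. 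This structural dimensional bound is the key nontrivial ingredient of the proof.

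Once $n$ is $m$-bounded, the rest is automatic. In part (1), $|[P,A]|=p^n$ is $m$-bounded since both $p$ and $n$ are $m$-bounded, and $|A|\leq|GL_n(\mathbb{F}_p)|$ is likewise $m$-bounded. In part (2), $\mathbf{r}([P,A])=n$ is $m$-bounded, and $\mathbf{r}(A)$ is $m$-bounded via the faithful embedding $A\hookrightarrow GL_n(\mathbb{F}_p)$ together with the standard fact that the rank of any subgroup of $GL_n(\mathbb{F}_p)$ is bounded in $n$ alone, independently of $p$.
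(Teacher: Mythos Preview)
Your preliminary reductions are correct and match the paper's: passing to $P/\Phi(P)$ (equivalently, noting that a $p'$-group $A$ acts faithfully on $P/\Phi(P)$), and replacing $P$ by $[P,A]$. Your observation that $p\le m$ in part~(1), and your endgame (once $n=\dim P$ is bounded, $|A|$ and $\mathbf r(A)$ follow from $A\hookrightarrow GL_n(\mathbb F_p)$) are also fine.

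The genuine gap is your ``central step''. The Clifford-type sketch does not bound $\dim W$ for an irreducible $A$-submodule $W$: the fact that the $A$-translates $[W,a^b]$ span $W$ only yields $\dim W\le |A|\cdot\dim[W,a]$, which is circular since $|A|$ is precisely what you have not yet bounded. ``Clifford-theoretic control of stabilizers'' does not by itself cut down the number of translates needed, and the promised ``further argument'' limiting the number of isotypic components is absent. So as written, $n$ is not bounded, and the argument does not close.

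The paper proceeds in the opposite order, and this is exactly what dissolves the difficulty. For part~(1) it first bounds $|A|$: since $A$ acts faithfully on $P/\Phi(P)$ and $|[P/\Phi(P),a]|\le m$ for every $a$, an external lemma \cite[Lemma~2.3]{KS-2016} gives that $|A|$ is $m$-bounded. Then $[P,A]=\prod_{a\in A}[P,a]$ (your Lemma~\ref{fact2}), so $|[P,A]|\le m^{|A|}$ is $m$-bounded and one is done. For part~(2) the paper does not argue at all; it simply cites \cite[Proposition~3.1]{KS-rank}. If you wish to salvage your strategy, the honest route is to prove directly that a $p'$-subgroup $A\le GL(V)$ with $\dim[V,a]\le d$ for every $a$ has $m$-bounded order (part~(1)) or $m$-bounded rank (part~(2)); that is the actual content of the cited lemmas, and it is not a consequence of the isotypic decomposition alone.
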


\begin{proof}
The proof of Part (2) can be found in \cite[Proposition 3.1]{KS-rank}. Let us prove Part (1).

 Without loss of generality  we can assume that $P=[P,A]$. It is sufficient to bound the order of $A$ since a bound on the order of $P$ will follow simply observing, by Lemma \ref{fact2}, that $[P,A]=\prod_{a\in A}[P,a]$. Thus, let us show that the order of $A$ is $m$-bounded.

 Let $B$ be the group of automorphisms that $A$ induces on the quotient $P/\Phi(P)$. By  \cite[Lemma 2.3]{KS-2016} the order of $B$ is $m$-bounded. Suppose that $B$ has order smaller than $A$. This means that $A$ contains a nontrivial  element $a$ which acts trivially on $P/\Phi(P)$. This in turn means that $P=C_P(a)\Phi(P)$. Since  the Frattini subgroup $\Phi(P)$ is a non-generating set, we conclude that $P=C_P(a)$ and so $a=1$. Thus, $|A|=|B|$ and we are done.
\end{proof}

The following result was obtained by Kov\'acs \cite{Kov} for soluble groups, and extended, to the general case, independently by Guralnick \cite{RGur} and Lucchini \cite{ALuc} using the classification of finite simple groups.
\begin{lemma}\label{Kovacs}
 If d is the maximum of the ranks of the Sylow subgroups of a finite group, then the rank of this group is at most d +1.
\end{lemma}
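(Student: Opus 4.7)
The plan is to prove the result in two stages, corresponding to the historical development: first the soluble case (the original Kovács argument) by induction, and then the reduction of the general case to the soluble case using the classification of finite simple groups.

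For the soluble case, I would argue by induction on $|G|$. It suffices to show $d(G)\le d+1$ for every finite soluble $G$ all of whose Sylow subgroups have rank at most $d$, since the hypothesis passes to subgroups and the rank of $G$ is the supremum of $d(H)$ over subgroups $H$. Pick a minimal normal subgroup $N$, which by solubility is an elementary abelian $p$-group for some prime $p$; since $N$ sits inside a Sylow $p$-subgroup of $G$, we have $d(N)\le d$. By induction, $d(G/N)\le d+1$. The task is then to lift a generating set of $G/N$ of size $d+1$ to a generating set of $G$ of size $d+1$. The key tool is a Gasch\"utz-type analysis of chief factors: a complemented chief factor $N$ can force $d(G)=d(G/N)+1$ only when $N$ is a trivial module; non-trivial irreducible modules can be absorbed into an existing generator using an $H^1$-cocycle argument. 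Working chief-factor by chief-factor, and using that every irreducible $p$-factor has $\mathbb{F}_p$-dimension at most $d$, one shows that at most one extra generator beyond those inherited from a Sylow-like quotient is ever needed.

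For the general case, the idea is to reduce to the soluble case plus a bound for the non-abelian composition factors. I would apply Lucchini's inequality $d(G)\le\max\{d(G/N),\,d(N)+1\}$ to a chief series $1=G_0\triangleleft G_1\triangleleft\cdots\triangleleft G_n=G$. For each abelian chief factor $G_i/G_{i-1}$ the bound $d\le d$ is immediate. For a non-abelian chief factor $G_i/G_{i-1}\cong T^k$ with $T$ a finite non-abelian simple group, one invokes the classification-based fact that $d(T)\le 2$ and, more precisely, that $d(T^k)$ is controlled by $\log k$ plus a constant, while any Sylow subgroup of $T^k$ already has rank growing at least linearly in $k$; thus $d(T^k)$ is bounded by the rank of any Sylow subgroup of $G$ that covers it. Feeding all these estimates into Lucchini's inequality telescopes to $d(G)\le d+1$.

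The hardest step, and the one truly requiring CFSG, is the bound on $d(T^k)$ in terms of the Sylow rank of $T^k$ for non-abelian simple $T$. Once this is in hand, the chief-series induction is essentially bookkeeping. The soluble part, by contrast, is delicate but elementary, the subtlety being the cohomological control of lifting generators across a complemented elementary abelian chief factor; this is where one must be careful not to gain more than a single extra generator across the whole series.
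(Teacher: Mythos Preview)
The paper does not actually prove this lemma: it is stated and immediately attributed to Kov\'acs for the soluble case and to Guralnick and Lucchini (independently, via the classification) for the general case, with no argument given. So there is no ``paper's own proof'' to compare against; your two-stage plan (soluble first, then reduce non-abelian chief factors via CFSG) is precisely the route taken in those cited sources, and in that sense you are aligned with what the paper invokes.

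That said, your sketch of the soluble step contains a genuine error. You assert that a complemented abelian chief factor $N$ can force $d(G)=d(G/N)+1$ only when $N$ is a trivial $G/N$-module, and that non-trivial irreducible modules can always be absorbed by an $H^1$ argument. This is false: take $G=S_3$ and $N=A_3$. Here $N$ is complemented, the action of $G/N\cong C_2$ on $N$ is non-trivial, yet $d(G/N)=1$ and $d(G)=2$. The jump occurs at a non-trivial module. What actually controls the soluble case is a Gasch\"utz-type counting of generating tuples (equivalently, the Eulerian function), which yields the inequality $d(G)\le\max\{d(G/N),\,d(N)+1\}$ for an abelian minimal normal $N$; the ``$+1$'' is paid at most once because $d(N)\le d$ for every such $N$, not because of any trivial/non-trivial dichotomy. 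Your inductive framework and the reduction to bounding $d(G)$ (rather than ranks of all subgroups) are fine, but the mechanism you describe for why only one extra generator is needed is the wrong one.

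For the non-abelian chief factors your heuristic is essentially the Guralnick--Lucchini idea, though the inequality you call ``Lucchini's inequality'' needs to be stated and justified carefully for non-abelian minimal normal $N$; this is exactly where CFSG enters, and your remark that bounding $d(T^k)$ against the Sylow rank of $T^k$ is the crux is correct.
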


Let $ F(G) $ denote the Fitting subgroup of a group $G$. Write $ F_0(G)=1, F_1(G)=F(G) $ and let $ F_{i+1}(G) $ be the inverse image of $ F(G/F_i(G)) $. If $ G $ is soluble, then the least number $ h $ such that $ F_h(G)=G $ is called the Fitting height $ h(G) $ of $ G $.
The next result is well-known (see for example  \cite[Lemma 2.4]{AST} for the proof).

\begin{lemma}\label{gama_inf}
	If $G$ is a finite group of Fitting height 2, then $\gamma_\infty(G)=\prod_p[F_p,H_{p'}]$, where $F_p$ is a Sylow $p$-subgroup of $F(G)$ and $H_{p'}$ is a Hall $p'$-subgroup of $G$.
\end{lemma}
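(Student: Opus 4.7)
The plan is to prove $\gamma_\infty(G) = N$, where $N := \prod_p [F_p, H_{p'}]$, by establishing two opposite inclusions.

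First I would show $N \leq \gamma_\infty(G)$: since $G/\gamma_\infty(G)$ is nilpotent, elements of coprime orders commute in it, so the images of the $p$-group $F_p$ and the $p'$-group $H_{p'}$ commute, giving $[F_p, H_{p'}] \leq \gamma_\infty(G)$ for each prime $p$.

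For the reverse inclusion I would first check that each $[F_p, H_{p'}]$ is normal in $G$. Because $G/F$ is nilpotent, its Hall $p'$-subgroup is normal, so the preimage $F_p H_{p'}$ is a normal subgroup of $G$; clearly $[F_p, H_{p'}]$ is normal in $F_p H_{p'}$. Since $G$ is generated by $H_{p'}$ and a Sylow $p$-subgroup $P_p$, the only remaining invariance to check is conjugation by $y \in P_p$. But $H_{p'}^y$ is a Hall $p'$-subgroup of the normal subgroup $F_p H_{p'}$, so by Hall's theorem $H_{p'}^y = H_{p'}^f$ for some $f \in F_p H_{p'}$; then $[F_p, H_{p'}]^y = [F_p, H_{p'}^y] = [F_p, H_{p'}]^f = [F_p, H_{p'}]$, as desired. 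Hence $N$ is normal in $G$.

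The main step, which I expect to be the hardest, is proving that $\bar{G} := G/N$ is nilpotent, which, given the Fitting-height-$\leq 2$ structure, amounts to showing that every Sylow $p$-subgroup $\bar{P}_p$ is normal in $\bar{G} = \bar{P}_p \bar{H}_{p'}$. From $\bar{G}/\bar{F}$ nilpotent we have $[\bar{P}_p, \bar{H}_{p'}] \leq \bar{F} = \bar{F}_p \bar{F}_{p'}$, and the obstacle is ruling out an $\bar{F}_{p'}$-component. I would pass to $\tilde{G} := \bar{G}/\bar{F}_p$, where $\tilde{F}_p = 1$ and, crucially, $[\tilde{F}_q, \tilde{H}_{q'}] = 1$ for every $q \neq p$ by the very construction of $N$. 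Because $\tilde{F}_q$ is characteristic in $\tilde{F}$ (hence $\tilde{G}$-invariant), the same trivial commutator holds for every Hall $q'$-subgroup of $\tilde{G}$. Any $p$-subgroup of $\tilde{G}$ lies in some Hall $q'$-subgroup, so $\tilde{P}_p$ centralizes $\tilde{F}_q$ for each $q \neq p$, and therefore centralizes $\tilde{F}_{p'} = \tilde{F}$. Thus $\tilde{P}_p \tilde{F} = \tilde{P}_p \times \tilde{F}$ is the preimage of the normal Sylow $p$-subgroup of the nilpotent $\tilde{G}/\tilde{F}$, and, being its unique Sylow $p$-subgroup, $\tilde{P}_p$ is normal in $\tilde{G}$. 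Combined with $[\tilde{P}_p, \tilde{H}_{p'}] \leq \tilde{F}$ and $\tilde{P}_p \cap \tilde{F} = 1$, this yields $[\tilde{P}_p, \tilde{H}_{p'}] = 1$. Lifting back, $[\bar{P}_p, \bar{H}_{p'}] \leq \bar{F}_p \leq \bar{P}_p$, so $\bar{P}_p$ is normal in $\bar{G}$ and $\bar{G}$ is nilpotent, whence $\gamma_\infty(G) \leq N$ as required.
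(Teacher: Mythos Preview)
Your argument is correct. The paper does not actually prove this lemma; it merely states that the result is well known and refers the reader to \cite[Lemma~2.4]{AST} for a proof, so there is no in-paper argument to compare against. Your self-contained proof is sound: the inclusion $N\le\gamma_\infty(G)$ is immediate from nilpotence of $G/\gamma_\infty(G)$; the normality of each $[F_p,H_{p'}]$ via conjugacy of Hall $p'$-subgroups inside the normal subgroup $F_pH_{p'}=FH_{p'}$ is standard and correctly carried out; and the key reduction modulo $\bar F_p$, after which $\tilde P_p$ centralises $\tilde F=\tilde F_{p'}$ and is therefore the (characteristic) Sylow $p$-subgroup of the normal subgroup $\tilde P_p\times\tilde F$, cleanly forces $[\bar P_p,\bar H_{p'}]\le\bar F_p$ and hence the nilpotence of $G/N$. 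One could shorten the normality step slightly by noting that conjugacy of Hall $p'$-subgroups in the soluble group $G$ already gives $[F_p,H_{p'}]^g=[F_p,H_{p'}^g]=[F_p,H_{p'}^x]$ for some $x$ in any fixed Hall $p'$-complement containing $F_p$, but your route through the normal subgroup $F_pH_{p'}$ is equally valid.
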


\section{Proof of Theorem \ref{q2finiteorder}}
In this section we  present a proof of Theorem \ref{q2finiteorder}. 
Let us start with the following lemma.

\begin{lemma}\label{mbounded}
	Let $ m $ be a positive integer and $q$ a prime.  Let $ A $ be an elementary abelian group of order $ q^2 $ acting coprimely on a finite group $ G $ in such a manner that $| C_G(a)|\leq m$ for every $ a\in A^{\#} $. Then the order of $ G $ is $ m $-bounded.
\end{lemma}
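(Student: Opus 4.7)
The plan is to reduce to bounding $A$-invariant Sylow subgroups of $G$ and to exploit the observation that the hypothesis forces the prime $q$ itself to be bounded by $m$ whenever $A$ acts nontrivially. The bound on $|G|$ then follows by combining Sylow-by-Sylow bounds with a bound on $|\pi(G)|$.

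For each $p\in\pi(G)$, I pick an $A$-invariant Sylow $p$-subgroup $P$; the hypothesis descends to $|C_P(a)|\leq m$ for every $a\in A^{\#}$. If $A$ acts trivially on $P$, then $P=C_P(A)$ has order at most $m$. Otherwise, by Burnside's basis theorem combined with coprimality, $A$ acts nontrivially on the elementary abelian quotient $V=P/\Phi(P)$. Since $V$ is a semisimple $\mathbb{F}_p[A]$-module I select a nontrivial irreducible summand $W$; its $\overline{\mathbb{F}}_p$-character $\chi$ has order $q$ (the exponent of $A$), its Galois orbit has size $d=\operatorname{ord}_q(p)$, and one has $\dim_{\mathbb{F}_p}W=d$ together with $q\mid p^d-1$. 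For every nontrivial $a\in\ker\chi$ the submodule $W$ lies in $C_V(a)$, so
\[
p^d\leq |C_V(a)| \leq |C_P(a)| \leq m,
\]
and therefore $q\leq p^d-1\leq m$. With $q\leq m$ in hand, the classical decomposition of a nilpotent group under a noncyclic abelian coprime action yields $P=C_P(A_1)\cdots C_P(A_{q+1})$, where the $A_i$ are the $q+1$ subgroups of order $q$ in $A$; hence $|P|\leq m^{q+1}\leq m^{m+1}$.

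To finish, any $p\in\pi(G)$ satisfies $p\leq m$, since the Sylow $P_p$ being nontrivial forces at least one $C_{P_p}(A_i)$ (or $C_{P_p}(A)$ in the trivial-action case) to be nontrivial, and so $p$ divides a quantity bounded by $m$. Thus $|\pi(G)|\leq\pi(m)$, and multiplying the Sylow bounds gives $|G|\leq (m^{m+1})^{\pi(m)}$, a bound depending only on $m$. The main technical point I expect to need is the genuine set-product decomposition $P=C_P(A_1)\cdots C_P(A_{q+1})$ rather than mere generation by the same centralizers: without it one only obtains a bound on the rank of $P$, and one would need an additional argument—for instance applying the abelian-case bound $|V|\leq m^{q+1}$ successively to the factors of the lower central series of $P$, combined with a bound on its nilpotency class—to control the order of $P$.
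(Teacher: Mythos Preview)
Your argument is correct. The overall architecture matches the paper's: reduce to $A$-invariant Sylow $p$-subgroups, use the set-product decomposition $P=C_P(A_1)\cdots C_P(A_{q+1})$ to get $|P|\le m^{q+1}$, and combine this with a bound on $q$ and on the primes in $\pi(G)$. The set-product decomposition you flag as the ``main technical point'' is indeed available for $p$-groups (induct on $|P|$ via the centre, using that central elements can be redistributed among the factors), and the paper uses exactly the same fact.

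Where you genuinely diverge from the paper is in bounding $q$. The paper argues that if $q>m!$ then, since $|\mathrm{Aut}(C_G(a))|\le (m-1)!<q$ for every $a\in A^{\#}$, the abelian group $A$ must act trivially on each $C_G(a)$; hence all these centralizers coincide with $C_G(A)$, forcing $G=C_G(A)$ and $|G|\le m$. Your route instead analyses a nontrivial irreducible $\mathbb{F}_p[A]$-constituent $W$ of $P/\Phi(P)$: the kernel of its associated character is a subgroup of order $q$ fixing $W$, so $p^d=|W|\le m$, while $q\mid p^d-1$ gives $q\le m-1$. Your bound on $q$ is sharper ($q\le m-1$ versus $q\le m!$) and yields a correspondingly better final bound on $|G|$, at the cost of invoking the representation theory of $A$ over $\mathbb{F}_p$; the paper's argument is shorter and entirely elementary. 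Both are valid, and your alternative via successive abelian sections of the lower central series would also work if one wished to avoid the set-product statement.
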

\begin{proof}
Note that $A$ normalizes some Sylow $p_i$-subgroup $P_i$ of $G$, for each prime divisor $p_i$ of $|G|$. Since $|G|=|P_1|\cdots|P_s|$, it is enough to show that  $|P_i|$ is $m$-bounded for every $p_i$. Indeed, let $C$ be the maximum possible value for $|P_i|$, which is an $m$-bounded number. Then every  $p_i\leq C$ and, so $|G|$ is $m$-bounded as well. 

Thus, without loss of generality, we can assume that $ G $ is a $ p $-group for some $ p\neq q. $ Then $ G=\prod_{a\in A^{\#}}C_G(a) $ and so $ G $ has order at most $ m^{q+1} $, since $A$ has exactly $q + 1$  cyclic subgroups.

 If  $ q\leq m! $, then  $ |G|\leq m^{m!+1}$, and we are done.  Assume   that $ q > m!$. Let $ a\in A^{\#} $. Note that  the centralizer $ C_G(a) $ is $ A $-invariant since $A$ is abelian.  Observe that the order of any automorphism of $ C_G(a) $ is at most $(m-1)!$ and therefore $A$ induces the trivial action on $ C_G(a) $. In other words $ C_G(a) \leq  C_G(A) $.  This implies that  all centralizers are equal and, in particular, that $ G=C_G(a) $ for any $ a\in A^{\#} $.  Hence $ |G|\leq m$, as desired. 
\end{proof}

The first step in the proof of Theorem \ref{q2finiteorder} deals with the case where $G$ is soluble. We  require the following result due to  Thompson \cite{T} (see also \cite[Corollary 3.2]{Turull}).
\begin{theorem}\label{Turull}
	Let $ A $ be a  soluble coprime group of automorphisms of a finite  soluble group $ G $. Then the Fitting height of $ G $ is bounded in terms of $ h(C_G(A)) $ and the number of prime factors of $ |A| $ counting multiplicities.
\end{theorem}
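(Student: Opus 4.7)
The plan is to induct on $n$, the number of prime factors of $|A|$ counting multiplicities, with the ultimate reduction being to the case where $|A|$ is itself a prime. The base case $n=0$ gives $A=1$ and thus $G=C_G(A)$, so there is nothing to prove. For the inductive step I would exploit the solubility of $A$ to pick a normal subgroup $A_0\trianglelefteq A$ with $|A/A_0|$ equal to some prime $p$. The inductive hypothesis, applied to the coprime action of $A_0$ on $G$, then yields $h(G)\leq F_{n-1}(h(C_G(A_0)))$ for an appropriate function $F_{n-1}$. Because $A_0$ is normal in $A$, the subgroup $C_G(A_0)$ is $A$-invariant, and the quotient $A/A_0$ acts coprimely on it with fixed-point subgroup exactly $C_G(A)\cap C_G(A_0)=C_G(A)$. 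Invoking the prime-order case on this action would then produce $h(C_G(A_0))\leq g(h(C_G(A)))$ for some function $g$, and combining the two inequalities delivers the required bound on $h(G)$.

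The heart of the argument is therefore the case $|A|=p$ prime, which is Thompson's original 1964 theorem. I would approach this through a minimal counterexample: after replacing $G$ by a suitable $A$-invariant quotient, reduce to a situation in which $G=[G,A]$ and $F(G)$ is the unique minimal normal $A$-invariant subgroup, hence elementary abelian of exponent $r$ for some prime $r\neq p$, with $G/F(G)$ embedding faithfully in $\mathrm{Aut}(F(G))$. The action of $A$ on the chief factor $V=F(G)$, viewed as an $\mathbb{F}_r$-module for the semidirect product $V\rtimes(G/F(G))A$, would then be analysed by Hall--Higman-type theorems, producing bounds on the degree of the minimal polynomial of a generator of $A$ acting on $V$. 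These bounds constrain how $A$ can act on successive Fitting factors $F_{i+1}(G)/F_i(G)$, and a layer-by-layer analysis produces a bound for $h(G)$ in terms of $h(C_G(A))$ alone.

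The main obstacle is precisely the prime-order case, which rests on substantial representation theory of soluble linear groups in coprime characteristic: the Hall--Higman Theorem B together with its refinements (Shult, Berger, and others) are needed to guarantee that the minimal polynomial of a $p$-element acting on $V$ cannot degenerate to $(x-1)^{p^a}$ except in very controlled circumstances. Turull's packaging of the bound in terms of the number of prime factors of $|A|$, rather than $|A|$ itself, then falls out of the induction above, provided the prime-order bound is made \emph{uniform} in the acting prime $p$; securing this uniformity, so that the function $g$ above does not depend on $p$, is the second delicate point of the proof and the essential content of \cite[Corollary 3.2]{Turull}.
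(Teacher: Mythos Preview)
The paper does not actually supply a proof of this theorem: it is quoted as a known result, attributed to Thompson \cite{T} with a pointer to \cite[Corollary 3.2]{Turull}, and is then used as a black box in the proofs of Lemmas \ref{casesoluorder} and \ref{casesolurank}. So there is no in-paper argument to compare your proposal against.

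That said, your sketch is a faithful outline of how the result is established in the literature. The reduction by induction on the number of prime divisors of $|A|$, via a normal subgroup $A_0$ of prime index and the observation that $A/A_0$ acts coprimely on $C_G(A_0)$ with fixed-point subgroup $C_G(A)$, is exactly the device Turull uses to pass from Thompson's prime-order theorem to the general soluble case. Your identification of the two genuine difficulties---the Hall--Higman/Shult/Berger input needed for the prime-order base case, and the uniformity of the resulting bound in the acting prime---is accurate and is precisely what \cite{Turull} supplies. One small point worth tightening: in the inductive step you should also note that the action of $A/A_0$ on $C_G(A_0)$ is coprime because $(|A|,|G|)=1$ already forces $(|A/A_0|,|C_G(A_0)|)=1$, and that $C_G(A_0)$ is itself soluble; both are immediate but should be said.
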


\begin{lemma}\label{casesoluorder}
	Theorem \ref{q2finiteorder} holds if $G$ is a  soluble group.
\end{lemma}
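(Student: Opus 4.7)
The strategy has three stages: bound the Fitting height of $G$, reduce by induction to Fitting height at most $2$, and then handle that case via Lemma \ref{gama_inf}, using Lemma \ref{mbounded} to remove the $q$-dependence.

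Every $g\in C_G(A)$ lies in $C_G(a)$ for each $a\in A^{\#}$, so the hypothesis gives $|\E(g)|\le m$ for all $g\in C_G(A)$. Applying Theorem \ref{KStheorems}(1) to $C_G(A)$ bounds $|\gamma_\infty(C_G(A))|$ in terms of $m$, hence $h(C_G(A))$ is $m$-bounded. Since $|A|=q^2$ has exactly two prime factors counted with multiplicity, Theorem \ref{Turull} yields that $h(G)$ is $m$-bounded, independently of $q$.

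We then induct on $h(G)$, noting that the hypothesis descends to $A$-invariant quotients because $C_{G/N}(a)=C_G(a)N/N$ under coprime action, and images of Engel sinks are Engel sinks of the images. For $h(G)\ge 3$, applying the induction to $G/F(G)$ bounds $|\gamma_\infty(G)F(G)/F(G)|$; a minimal counterexample argument then shows that every minimal $A$-invariant normal subgroup $M$ of $G$ must lie in $\gamma_\infty(G)$, so $|\gamma_\infty(G)/M|$ is bounded and one is reduced to bounding $|M|\le F(G)$, effectively a Fitting height $2$ task.

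For $h(G)\le 2$, $\gamma_\infty(G)\le F(G)$ is nilpotent and Lemma \ref{gama_inf} gives $\gamma_\infty(G)=\prod_p[F_p,H_{p'}]$, with both $F_p$ and $H_{p'}$ chosen $A$-invariant. For each $a\in A^{\#}$ and $g\in C_{H_{p'}}(a)$, Lemma \ref{[P,g]}(1) bounds $|[F_p,g]|$ by a function of $m$ (as $g$ is a $p'$-element normalising the $G$-normal subgroup $F_p$), and Lemma \ref{|[U,V]|}(1) then bounds $|[F_p,C_{H_{p'}}(a)]|$. Since $H_{p'}=\langle C_{H_{p'}}(a):a\in A^{\#}\rangle$ by coprime action of the noncyclic abelian $A$, Lemmas \ref{fact1} and \ref{fact2} give an $(m,q)$-bound on $|[F_p,H_{p'}]|$. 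Lemma \ref{mbounded} is then applied to an appropriate $A$-invariant section of $\gamma_\infty(G)$ whose $A^{\#}$-centralizers are controlled in $m$ alone---using $|\gamma_\infty(C_G(a))|\le f(m)$ together with a commutator analysis---yielding an $m$-only bound; the number of primes $p$ that contribute nontrivially is bounded by combining Lemma \ref{[P,g]}(2) (each nontrivial $[F_p,g]$ sits inside $\langle\E(g)\rangle$) with the bound $|\E(g)|\le m$.

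\emph{Main obstacle.} The final step is the crux: the coprime decomposition naturally carries a factor of $q+1$, and Lemma \ref{mbounded} is tailor-made to absorb it, but its deployment demands the construction of an $A$-invariant section of $\gamma_\infty(G)$ whose $a$-centralizers are bounded purely in $m$ for every $a\in A^{\#}$. This construction, together with ensuring that only $m$-boundedly many primes contribute to the product in Lemma \ref{gama_inf}, is the main technical challenge of the proof.
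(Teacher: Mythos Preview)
Your overall architecture---bound $h(G)$ via Theorem~\ref{Turull}, induct on $h(G)$, and reduce to the metanilpotent case via Lemma~\ref{gama_inf}---matches the paper exactly. But the step you yourself flag as the main obstacle is precisely where the paper supplies a concrete mechanism that your plan lacks.

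In the case $h(G)=2$, the paper does \emph{not} first obtain an $(m,q)$-bound on $|[F_p,H_{p'}]|$ and then try to strip out the $q$. Instead, after passing to the faithful quotient $\overline{H}_{p'}=H_{p'}/C_{H_{p'}}(F_p)$ and bounding $|[F_p,C_{\overline{H}_{p'}}(a)]|$ in terms of $m$ (as you also do), it invokes the inclusion
\[
C_{[F_p,\overline{H}_{p'}]}(a)\ \le\ [F_p,\,C_{\overline{H}_{p'}}(a)]\qquad\text{for every }a\in A^{\#}.
\]
This is the missing idea: it says that the $A$-invariant group $[F_p,\overline{H}_{p'}]$ itself has all its $a$-centralizers of $m$-bounded order, so Lemma~\ref{mbounded} applies directly to it and gives an $m$-only bound on $|[F_p,H_{p'}]|$. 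There is no need to manufacture a further ``appropriate $A$-invariant section'', nor to pass through the product $\prod_{a}[F_p,C_{H_{p'}}(a)]$ at all. Your plan never identifies this inclusion (which hinges on the faithfulness of $\overline{H}_{p'}$ on $F_p$), and without it the proposed route from an $(m,q)$-bound to an $m$-bound remains a hope rather than an argument.

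Two smaller points. First, the paper's induction for $h(G)>2$ is cleaner than your minimal-counterexample sketch: take $N=F_2(G)$, bound $|\gamma_\infty(N)|$ by the $h=2$ case, observe $h(G/\gamma_\infty(N))=h(G)-1$, and apply induction to the quotient. Second, your prime-counting via Lemma~\ref{[P,g]}(2) conflates the rank and order statements; the paper's argument is simpler: once $|[F_p,H_{p'}]|\le C$ for every $p$, any prime $p>C$ forces $[F_p,H_{p'}]=1$ because a nontrivial $p$-group has order at least $p$, so only primes $p\le C$ contribute.
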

\begin{proof} 

Suppose that $G$ is soluble. It follows from  Theorem \ref{KStheorems}(1)  that  $\gamma_\infty(C_G(a))$ has  $m$-bounded order, for each $a$ in $A^{\#}$. Therefore $h(C_G(a))$ is $m$-bounded for any $a$ in $A^{\#}$ and, by  Theorem \ref{Turull},  $h(G)$ is $m$-bounded as well. The lemma now can be proved by induction on $h(G)$. 

If $h(G)=1$, then the result is obvious.  Suppose that $h(G)=2$.
By Lemma \ref{gama_inf},  we have  $\gamma_{\infty}(G)=\prod_p[F_p,H_{p'}] $, where $ F_p $ is a Sylow $ p $-subgroup of $ F(G) $, while $ H_{p'} $ is a Hall $ p' $-subgroup of $ G $ that can be chosen $A$-invariant  and the product is taken over all prime divisors of $ |F(G)| $.

First observe that  $ |[F_p,H_{p'}]|= |[F_p,\overline{H}_{p'}]|$, where  $\overline{H}_{p'}=H_{p'}/C_{H_{p'}}(F_p)$.
By Lemma  \ref{[P,g]}(1), the order of  $ [F_p,h] $ is $ m $-bounded, for any $h\in C_{\overline{H}_{p'}}(a)$. It follows from Lemma \ref{|[U,V]|}(1)  that 
the order of $[F_p,C_{\overline{H}_{p'}}(a)]$ is $ m $-bounded for any $ a\in A^{\#}$. 
Since $ C_{[F_p,\overline{H}_{p'}]}(a)\leq [F_p,C_{\overline{H}_{p'}}(a)]$ for any $ a\in A^{\#} $, 
in view of Lemma \ref{mbounded}, we  conclude that $ |[F_p,\overline{H}_{p'}]| $ is $ m $-bounded.   

Let $C$ be the maximum possible value for $|[F_p,H_{p'}]|$, which is an $m$-bounded number. Suppose that $p>C$. Since $F_p$ cannot have nontrivial subgroups of order at most $C$, deduce that $[F_p,H_{p'}]=1$. Let $p_1,\ldots,p_s$ be the primes dividing $|\gamma_\infty(G)|$. It follows from the observation above that each $p_i$ is at most $C$. Therefore $|\gamma_{\infty}(G)|\leq C^s$, which is an  $m$-bounded number, as claimed.

Finally, suppose that $h(G)>2$ and let $N = F_2(G)$ be the second term of the Fitting series of $G$. It is clear that the Fitting height of $G/\gamma_{\infty}(N)$ is $h(G)-1$ and $\gamma_{\infty}(N)\leq \gamma_{\infty}(G)$. The above argument shows that $|\gamma_{\infty}(N)|$ is $m$-bounded. Hence, by induction the order of $\gamma_{\infty}(G)/\gamma_{\infty}(N)$ is $m$-bounded. The result follows. 
\end{proof}

In order to deal with  the case  when  $G$ is a nonsoluble group, first  we consider the special case where $G$ is a direct   product of  nonabelian simple groups.   

A proof of the following theorem can be found, for example, in \cite[Corollary 3.5]{AS}.

\begin{theorem}\label{solub}
Let $q$ be a prime and $A$ an elementary abelian group of order $q^2$ acting on a finite $q'$-group $G$ in such a way that $C_G(a)$ is nilpotent for every nontrivial $a\in A$. Then $G$ is soluble.
\end{theorem}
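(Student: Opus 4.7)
My approach is induction on $|G|$, assuming $G$ is a counterexample of minimal order. The hypothesis is clearly inherited by $A$-invariant subgroups (since $C_H(a)=C_G(a)\cap H$ is a subgroup of a nilpotent group) and by $A$-invariant quotients (by the coprime-action identity $C_{G/M}(a)=C_G(a)M/M$ together with the fact that nilpotency is preserved under quotients). So every proper $A$-invariant section of $G$ is soluble.

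Let $N$ be a minimal $A$-invariant normal subgroup of $G$; a standard argument (every characteristic subgroup of $N$ is $A$-invariant and normal in $G$) shows that $N$ is characteristically simple, hence a direct product of isomorphic simple groups. If $N$ were soluble then $G/N$ would be soluble by the inductive hypothesis and so would $G$, a contradiction. Hence $N=S_1\times\cdots\times S_k$ with the $S_i$ pairwise isomorphic nonabelian simple groups transitively permuted by $A$. Since $|A|=q^2$ and $A$ is abelian, $k\in\{1,q,q^2\}$. To exclude $k>1$, pick $a\in A^{\#}$ acting nontrivially on $\{S_1,\dots,S_k\}$ (possible by transitivity); since $a$ has order $q$, its nontrivial orbits on the factors have length exactly $q$. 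A direct computation, using $a^{q}=1$, shows that the restriction of $C_N(a)$ to such an orbit is a diagonal subgroup isomorphic to $S_1$, which is nonabelian simple and hence not nilpotent. This contradicts the nilpotency of $C_G(a)\supseteq C_N(a)$; therefore $k=1$ and $N$ is a single $A$-invariant nonabelian finite simple group.

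The remaining task is to rule out the existence of a nonabelian finite simple group admitting a (necessarily faithful, after factoring out the kernel of the action) coprime action of an elementary abelian group of order $q^2$ with every nontrivial cyclic subgroup centralizing a nilpotent subgroup. This is the main obstacle, and it genuinely depends on the Classification of Finite Simple Groups together with detailed information on fixed-point subgroups of coprime automorphisms in each family of simple groups (in the spirit of work of Fong, Aschbacher--Guralnick, and others). Once this classification-dependent step is settled --- which is the content of the cited \cite[Corollary 3.5]{AS} --- the minimal counterexample cannot exist, and $G$ must be soluble.
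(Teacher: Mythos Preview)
The paper does not supply its own proof of this theorem; it simply cites \cite[Corollary~3.5]{AS}. So there is no in-paper argument to compare against. Your reduction (minimal counterexample, inheritance to $A$-invariant sections, reduction to $N=G$ characteristically simple, and elimination of $k>1$ via the diagonal centralizer) is correct and standard.

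Where your write-up falls short is the final case $k=1$: you describe it as requiring ``detailed information on fixed-point subgroups of coprime automorphisms in each family of simple groups'' and then point back to \cite[Corollary~3.5]{AS}, which is the full theorem you are trying to prove. That makes the argument circular rather than a genuine reduction. In fact $k=1$ can be dispatched with one clean observation that the paper records immediately after this theorem: any coprime group of automorphisms of a nonabelian finite simple group is cyclic (see \cite[Lemma~2.7]{PS-Gu}). In your situation the action of $A$ on $N$ is faithful, since if some $a\in A^{\#}$ acted trivially on $N$ then $N\le C_G(a)$ would be nilpotent, contradicting that $N$ is nonabelian simple. Hence the noncyclic group $A$ of order $q^{2}$ cannot act coprimely on the single simple factor $N$, and $k=1$ is impossible. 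With this addition your argument becomes a self-contained proof (modulo that one CFSG-dependent lemma on automorphisms of simple groups), rather than a reduction that ultimately defers to the full cited result.
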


In the next lemma we will also use the fact that if $A$ is any coprime group of automorphisms of a finite simple group, then $A$ is cyclic (see, for example, \cite[Lemma 2.7]{PS-Gu}).

Let $M_1=M_1(m)$ be the maximal possible value of $|\gamma_\infty(G)|$ in Theorem \ref{KStheorems}(1) and let $M_2=M_2(m)$ be the maximal possible value of $|\gamma_\infty(G)|$ in Lemma \ref{casesoluorder}. Set $M=\mathrm{max}\{M_1,M_2\}$.

\begin{lemma}\label{order-simpleCase}
	Let $G$  and $A$ be as in Theorem  \ref{q2finiteorder}.  Moreover assume  that $ G=S_1\times \cdots\times S_t$ is a direct product of $ t $ nonabelian simple groups $ S_i $. Then the order of  $ G $ is $m$-bounded.
\end{lemma}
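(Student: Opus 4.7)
The plan is to analyse the $A$-action on the set of simple factors $\{S_1,\ldots,S_t\}$ and show first that each $|S_i|$ is $m$-bounded, then that either $|G|$ is directly $m$-bounded or $q$ itself is $m$-bounded, and finally that $t$ is $m$-bounded. Since $|A|=q^2$, every $A$-orbit on $\{S_1,\ldots,S_t\}$ has size $1$, $q$, or $q^2$; for an $A$-invariant factor $S$ we shall use that a coprime group of automorphisms of a nonabelian finite simple group is cyclic, so $A\to\mathrm{Aut}(S)$ has cyclic image, hence its kernel has order at least $q$. Write $M$ for the constant introduced before the statement.

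To bound individual factor orders, I would show that for every $A$-orbit $\mathcal{O}$ there is an $a\in A^\#$ such that $C_G(a)$ contains a perfect subgroup isomorphic to a factor $S\in\mathcal{O}$: if $|\mathcal{O}|=1$, take $a$ nontrivial in the kernel of $A\to\mathrm{Aut}(S)$, so $S\leq C_G(a)$; if $|\mathcal{O}|\in\{q,q^2\}$, take any $a\in A\setminus A_\mathcal{O}$, so that $\langle a\rangle$ of order $q$ acts freely on $\mathcal{O}$ and $C_{S_\mathcal{O}}(a)$ is a direct product of diagonal copies of $S$, each perfect. In each case the relevant subgroup lies in $\gamma_\infty(C_G(a))$, which by Theorem \ref{KStheorems}(1) has order at most $M$, forcing $|S|\leq M$. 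Observe that for orbits of size $q^2$, or of size $q$ with $A_\mathcal{O}$ acting trivially on its factors, the diagonal has size $|S|^q$, which yields the sharper inequality $|S|^q\leq M$ and therefore $q\leq\log_{60}M$.

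Next I would dispose of the two remaining possibilities in parallel. If $A$ acts trivially on the whole of $G$, then $C_G(a)=G$ for every $a\in A^\#$; since $G$ is perfect, $G=\gamma_\infty(C_G(a))$ has $m$-bounded order and we are done. Otherwise $A$ acts nontrivially either through a permutation of some non-singleton orbit or through a nontrivial automorphism on some $A$-invariant factor, and by the observation above either $q\leq\log_{60}M$ already, or the remaining cases (faithful stabilizer action in a size-$q$ orbit, or a nontrivial $A$-action in a singleton orbit) produce a coprime cyclic subgroup of order $q$ inside $\mathrm{Aut}(S)$ for some $S$ with $|S|\leq M$; such a subgroup embeds into $\mathrm{Out}(S)$ and $|\mathrm{Out}(S)|$ is bounded in terms of $|S|$, so $q$ is $m$-bounded in this case too. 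With $q$ now under control, a double-counting argument finishes the job: for each $a\in A^\#$ the inequality $|\gamma_\infty(C_G(a))|\leq M$ combined with $\gamma_\infty(C_G(a))=\prod_\mathcal{O}\gamma_\infty(C_{S_\mathcal{O}}(a))$ bounds, for each $a$, the total contribution of orbits on which $a$ acts trivially or freely, and averaging over $a\in A^\#$ yields $m$-bounded upper bounds for the number of orbits of each of the four types (singleton; size $q$ with trivial stabilizer action; size $q$ with faithful stabilizer action; size $q^2$). This gives an $m$-bound on $t$ and hence on $|G|\leq M^t$.

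The main obstacle is the bound on $q$ in the delicate case where every nontrivial $A$-action is by faithful coprime automorphisms on individual simple factors; here the argument relies on the Schreier-type fact (a consequence of CFSG) that $|\mathrm{Out}(S)|$ is bounded in terms of $|S|$. The counting step, while elementary, must be carried out carefully by orbit type, because contributions to $|\gamma_\infty(C_G(a))|$ scale as $|S|^{|\mathcal{O}|}$ for orbits fixed pointwise by $a$ but only as $|S|^{|\mathcal{O}|/q}$ for orbits permuted freely by $a$.
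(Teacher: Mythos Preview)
Your plan is correct and follows the same three-step architecture as the paper---bound each $|S_i|$, then bound $q$, then bound the number of factors---but the individual arguments differ in instructive ways. For the $q^2$-orbit case you observe directly that $C_{S_\mathcal{O}}(a)\cong S^q$ is perfect and sits inside $\gamma_\infty(C_G(a))$, giving $|S|^q\le M$; the paper instead picks a soluble non-nilpotent subgroup $D\le S$, forms $D_0=\langle D^A\rangle\cong D^{q^2}$, and invokes the soluble case (Lemma~\ref{casesoluorder}) to bound $|\gamma_\infty(D_0)|$. Your route is shorter and avoids that dependency. Similarly, in the final count the paper appeals to Theorem~\ref{solub} to guarantee that each $C_{K_i}(a)$ is non-nilpotent for some $a$, whereas your diagonal observation already produces a perfect subgroup of $C_{K_i}(a)$ isomorphic to $S$, so you do not need Theorem~\ref{solub} at all. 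One remark: in the ``faithful action on a single factor'' case you invoke a Schreier-type bound on $|\mathrm{Out}(S)|$, but nothing so deep is required---the trivial estimate $q\le|\mathrm{Aut}(S)|\le |S|!\le M!$ is enough, and this is essentially how the paper handles the same case (arguing that if $q>M!$ then any order-$q$ automorphism of a group of order at most $M$ is trivial). With that simplification your argument uses CFSG only through the fact that a coprime group of automorphisms of a simple group is cyclic, exactly as the paper does.
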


\begin{proof}
First, we prove that each direct factor $S_i$ has order at most $M$. If $S_i$ is $A$-invariant, then $S_i$ is contained in $C_G(a) $ for some $a\in A^{\#}$ and so $S_i\leq\gamma_{\infty}(C_G(a))$.  Hence $|S_i|\leq M$, since  by Theorem \ref{KStheorems}(1)  and the above observation $|\gamma_{\infty}(C_G(a))|\leq M$. Suppose now that $S_i$ is not $A$-invariant. Choose $a\in A^{\#} $ such that $S_i^a\neq S_i$. Write $T=S_i\times S_i^a\times\cdots\times S_i^{a^{q-1}}$. Since $S_i^a\neq S_i$, it is easy to see that $C_T(a)$ is exactly the ``diagonal'' subgroup of $T$, that is, 
	$$C_T(a)=\{(g,g^a,\ldots,g^{a^{q-1}})\mid g\in S_i\}.$$ 
Thus, $C_T(a)$ is isomorphic to $S_i$ and Theorem \ref{KStheorems}(1) implies that $|S_i|\leq M$, as claimed.

Next, we show that if $q\geq M!$, then $A$ acts trivially on $G$.
Suppose that this is false and $q\geq M!$ while $A$ acts nontrivially on $G$.

Let $K$ be a minimal normal $A$-invariant subgroup of $G$. Thus, $K$ is direct product of subgroups $S^a$, where $S$ is one of $S_i$ and $a$ ranges through $A$. As shown above, if $K=S$, then $|K|\leq M$. Since for any $a\in A^{\#}$ the order of $a$ is bigger that $M!$,  it follows that $K\leq C_G(A)$, a contradiction. Suppose now  that $K\not=S$. 
In this case either $K$ is a product of $q$ simple factors or a product of $q^2$ ones. In the former case we have $K=S\times S^a\cdots\times S^{a^{q-1}}$ for some $a\in A$ and there exists $b\in A$ such that $S^b=S$. If $b$ centralizes $S$, then $K\leq C_G(b)$ and so $|K|\leq M$ whence again we deduce that $A$ centralizes $K$ since $q\geq M!$. This is a contradiction because $S\neq S^a$. Hence, $b$ does not centralize $S$. Note that $C_K(a)\cong S$ is the ``diagonal" subgroup of $K$. Observe that $C_K(a)$ is $A$-invariant and the order of $C_K(a)$ is at most $M$. Taking into account that $q\geq M!$ we deduce that $A$ acts trivially on $C_K(a)$. In particular, $C_K(a)\leq C_K(b)$. On the other hand, $$C_K(b)=C_S(b)\times C_S(b)^a\times\dots C_S(b)^{a^{q-1}}$$ and so $C_K(a)$ is not contained in $C_K(b)$.

Therefore we can assume that $K$ is a product of $q^2$ simple factors which are  transitively permuted by $A$. Recall that  minimal nonnilpotent finite groups are soluble \cite[Theorem 9.1.9]{Rob}. Therefore $S$ contains a soluble subgroup $D$ which is not nilpotent.  Thus $|\gamma_\infty(D)|\geq 3$. Then $D_0=\langle D^A\rangle$ is a soluble $A$-invariant subgroup of $K$ such that $$|\gamma_\infty(D_0)|\geq3^{q^2}.$$ This is a contradiction since $q\geq M$ while $|\gamma_\infty(D_0)|\leq M$. 

This proves that either $A$ acts trivially on $G$  or $q\leq M!$. In the former case $|G|\leq M$ and we are done. Thus, assume that $q\leq M!$.

Write $G=K_1\times\cdots\times K_s $ where each $K_i$ is a minimal normal $A$-invariant subgroup of $G$. Since, as  proved above,  each  factor $S_i$ has order at most $M$ and  $q$ is now $M$-bounded,  each $K_i$ is of $m$-bounded order. Therefore, it is sufficient to bound $s$.

By Theorem \ref{solub}, for every $i$ there exists $a\in A^{\#} $ such that $C_{K_i}(a)$ is not nilpotent. Therefore, $ \gamma_{\infty}(C_{K_{i}}(a))\neq1$. Since 	$\gamma_{\infty}(C_G(a))$ has order at most $M$, it follows that $\gamma_{\infty}(C_{K_{i}}(a))$ can be nontrivial for at most $M$ indexes $i$. Taking into account that there are only $q+1$ nontrivial proper subgroups in $A$, we conclude that $s\leq (q+1)M\leq (M!+1)M$.
\end{proof}

Every finite group $G$ has a normal series each of whose factors either is soluble or is a nonempty direct product of nonabelian simple groups. The nonsoluble length $\lambda(G)$ of a finite group $G$ is defined as the minimum number of nonsoluble factors in a normal series of that kind.

We will require the following two  results.

\begin{theorem}[{\cite[Theorem 1.3]{KS-nonsoluble}}]\label{KS-nonsoluble}
	Let $ A $ be a coprime group of automorphisms of a finite group  $ G $. Then the nonsoluble length of $ G $ is bounded in terms of $ \lambda(C_G(A)) $ and the number of prime factors of $ |A| $ counting multiplicities.
\end{theorem}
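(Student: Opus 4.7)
The result is the main theorem of~\cite{KS-nonsoluble}, so I only sketch the natural strategy. I would induct on the number $k$ of prime factors of $|A|$ counted with multiplicity. For $k=0$ there is nothing to prove, so the essential content lies in establishing the base case $k=1$ and then performing a routine reduction in the inductive step.

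For the inductive step, choose $B\leq A$ of prime order. Then $A/B$ acts coprimely on the $A$-invariant subgroup $C_G(B)$, and by the standard coprime quotient identity $C_{C_G(B)}(A/B)=C_G(A)$. By induction, $\lambda(C_G(B))$ is bounded in terms of $\lambda(C_G(A))$ and $k-1$. It therefore suffices to bound $\lambda(G)$ in terms of $\lambda(C_G(B))$ when $|B|$ is prime, which reduces the whole problem to the base case.

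For $|A|=p$ prime, I would take an $A$-invariant chief series of $G$ and analyse its nonsoluble factors $V=K/L$. Each such $V$ is a direct product $S_1\times\cdots\times S_t$ of isomorphic nonabelian simple groups permuted by $A$. If $A$ fixes each $S_i$ setwise, it induces on each $S_i$ a coprime automorphism of prime order, which has nontrivial fixed points by a Hartley-type fixed-point theorem, so each $S_i$ contributes a nonsoluble section to $C_G(A)$. Otherwise $A$ moves some $S_i$; the orbit has length $p$, and $C_{S_1\times\cdots\times S_p}(A)$ is a diagonal copy of $S_1$, again a nonsoluble section of $C_G(A)$. Either way, each nonsoluble layer of $G$ leaves a footprint in $C_G(A)$, and counting these footprints bounds $\lambda(G)$ in terms of $\lambda(C_G(A))$.

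The main obstacle is precisely the base case. Turning the qualitative statement ``every nonsoluble chief factor is witnessed in $C_G(A)$'' into a uniform quantitative bound requires a careful accounting of how these footprints accumulate through successive layers of a chief series and a refinement of the Fitting-type arguments used in Section~3 to the nonsoluble setting. In addition, the underlying fixed-point theorem for coprime automorphisms of finite simple groups is classification-dependent, so CFSG enters unavoidably at this step. Once the base case is in hand, the inductive step is purely formal via the coprime quotient identity recorded at the beginning of Section~2.
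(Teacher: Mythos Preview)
The paper does not prove this theorem at all; it is quoted as Theorem~1.3 of \cite{KS-nonsoluble} and used as a black box in the proofs of Theorems~\ref{q2finiteorder} and~\ref{q2finiteposto}. There is therefore no proof in the present paper to compare your sketch against.

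That said, your outline is a faithful summary of the strategy in \cite{KS-nonsoluble}: the induction on the number of prime factors of $|A|$ and the reduction to $|A|$ prime via a subgroup $B$ of prime order (using $C_{C_G(B)}(A)=C_G(A)$) are exactly how the original paper is organised, and you are right that the entire content lies in the base case. You are also right that the naive footprint argument---each nonsoluble chief factor of $G$ contributes a nonsoluble section to $C_G(A)$---does not by itself bound $\lambda(G)$: one must show that these sections sit at \emph{distinct} levels of the nonsoluble series of $C_G(A)$, not merely that they exist. The actual proof in \cite{KS-nonsoluble} handles this by working with the characteristic series rather than an arbitrary chief series and by analysing how centralizers of coprime automorphisms interact with the soluble radical and the layer at each stage; your sketch correctly flags this as the missing ingredient without supplying it.
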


\begin{lemma}[{\cite[ Lemma 3.2]{PEA}}]\label{simple2generator}
	Let $ K $ be a nonabelian finite simple group and $ p $ a prime. There exists a prime $ s $, different from $ p $, such that $ K $ is generated by two Sylow $ s $-subgroups.
\end{lemma}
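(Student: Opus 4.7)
The plan is to invoke the classification of finite simple groups and argue family by family, exhibiting in each case at least two distinct primes $s_1,s_2$ dividing $|K|$ for which some pair of Sylow subgroups generates $K$; since by Burnside's $p^aq^b$ solvability theorem any nonabelian simple group has order divisible by at least three distinct primes, we can then always select $s\in\{s_1,s_2\}$ with $s\neq p$. A key reduction is that we only need to find \emph{some} pair of Sylow $s$-subgroups generating $K$, not to prove that every such pair does.

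For the alternating groups $A_n$ with $n\geq 5$, both $s=2$ and $s=3$ divide $|A_n|$, so at least one of them differs from $p$. One exhibits a Sylow $s$-subgroup supported on a subset of $\{1,\ldots,n\}$ of appropriate size, then conjugates by a long cycle, and checks using standard generating arguments for $A_n$ that the two Sylow $s$-subgroups thus obtained generate the whole group.

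For the groups of Lie type defined over $\mathbb{F}_q$ in characteristic $r$, I would rely on Zsigmondy's theorem to produce a primitive prime divisor $s$ of $q^e-1$ for an exponent $e$ attached to a maximal torus $T$ of $K$. Such a prime $s$ has the property that its Sylow subgroups are cyclic and lie in a unique maximal torus, and the overgroup structure of such subgroups is controlled by the classification of maximal subgroups due to Seitz, Liebeck--Seitz and collaborators. Two Sylow $s$-subgroups sitting inside distinct (non-conjugate or generically chosen) maximal tori cannot jointly lie in any proper subgroup of $K$, and hence must generate it. For most Lie type groups at least two distinct exponents $e$ yield distinct primitive prime divisors, so the choice $s\neq p$ is available; the small rank and small $q$ exceptions to Zsigmondy's theorem, together with the $26$ sporadic groups, would then be dealt with by direct inspection of the ATLAS of finite groups.

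The main technical obstacle is the Lie type case, which requires detailed information about the maximal subgroups containing Sylow subgroups associated with primitive prime divisors of $q^e-1$, and careful separate treatment of each of the Zsigmondy exceptions. Verifying the claim for the sporadic groups and for a few small-degree alternating and linear groups is in principle routine but unavoidable, and any uniform statement short of case-by-case check seems to genuinely require the classification.
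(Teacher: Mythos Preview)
The paper does not prove this lemma; it simply quotes it as \cite[Lemma~3.2]{PEA} (Melo--Lima--Shumyatsky) and uses it as a black box. So there is no in-paper proof to compare your attempt against, and you are supplying what the authors deliberately outsourced.

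Your overall strategy---invoke the classification of finite simple groups, exhibit for each family two distinct primes $s_1,s_2$ with the required two-Sylow generation property, and then use Burnside's $p^aq^b$ theorem to guarantee that at least one of them avoids $p$---is the standard shape of such arguments and is almost certainly how the cited source proceeds as well. The reduction to finding \emph{some} generating pair rather than proving all pairs work is also correctly identified.

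That said, what you have written is a plan rather than a proof, and the genuine difficulties are acknowledged but not resolved. For $A_n$, the claim that two well-chosen Sylow $2$- or Sylow $3$-subgroups generate is believable but needs an explicit construction and verification, especially for small $n$. For groups of Lie type, invoking Zsigmondy primes and the Liebeck--Seitz overgroup classification is the right framework, but the sentence ``two Sylow $s$-subgroups sitting inside distinct maximal tori cannot jointly lie in any proper subgroup'' is precisely the step that requires the long maximal-subgroup case analysis; it is not automatic. The Zsigmondy exceptions, the small-rank/small-$q$ cases, and the sporadic groups are correctly flagged as needing individual treatment, but none of that treatment is present. There is no conceptual error in your outline; it is simply that the content of the lemma lives entirely in the case-by-case verification, which you have deferred.
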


We are now ready to  prove Theorem \ref{q2finiteorder} in the general case.

\begin{proof}[Proof of Theorem \ref{q2finiteorder}] In view of  Lemma \ref{casesoluorder} we can assume that $ G $ is not soluble. By Theorem \ref{KStheorems}(1), the order of $ \gamma_{\infty}(C_G(a)) $ is $ m $-bounded for any $a\in A^{\#}$.  Therefore  $\lambda(\gamma_{\infty}(C_G(a)))$ is $ m $-bounded, and so $ \lambda(C_G(a)) $ is $ m $-bounded as well. Thus Theorem \ref{KS-nonsoluble} guarantees that $ \lambda(G) $ is $ m $-bounded, and we can use induction on $ \lambda(G) $.

First, assume that $ \lambda(G)=1 $ and $ G=G' $.  Let $R(G)$ be the  soluble radical of $ G $.  It follows that $ G/R(G) $ is a product of nonabelian simple groups.  Since $ R(G) $ is an  $ A $-invariant soluble group, Lemma \ref{casesoluorder} ensures that the order of $\gamma_{\infty}(R(G))$ is $m$-bounded. Note that if the nilpotent residual of $G/\gamma_{\infty}(R(G))$ has $m$-bounded order, then $ \gamma_{\infty}(G) $ has $m$-bounded order too. Thus, without loss of generality, we assume that $\gamma_{\infty}(R(G))=1$ and  $R(G)=F(G)$. 

Next we will show that the order of $[F(G),G]$ is $m$-bounded. Since $F(G)$ is nilpotent we have
$$[F(G),G]=[P_1,G]\cdots [P_l,G],$$ 
where the subgroups $P_i$ are Sylow $p_i$-subgroups of $F(G)$. It is sufficient to show that the order of $[P,G]$ is $m$-bounded  for  each $p$-Sylow subgroup $P$ of $F(G)$. Note that, considering the quotient of $G$ by  the Hall $p'$-subgroup of $F(G)$ and taking into account that the soluble radical of $G$ is nilpotent, we can assume that $F(G)=P$ is a $p$-group.

Since $\lambda(G)=1$ and $G=G'$, the quotient $G/F(G)$ is a direct product of nonabelian simple groups. By Lemma \ref{order-simpleCase}   $ G/P $  has $ m $-bounded order. Write $G=G_1G_2\ldots G_t$, where  $G_i/P$ is a minimal normal $ A $-invariant subgroup of $G/P$. Obviously $[P,G]=\prod_i [P,G_i]$. Thus it is sufficient to show that the order of $[P,G_i]$ is $m$-bounded. Hence, without loss of generality, we can assume that $G=G_1$, that is, $G/P$ is a product of isomorphic nonabelian simple groups transitively permuted by $A$.  By Lemma \ref{simple2generator} the quotient  $ G/P $ is generated by the image of two Sylow $s$-subgroup $H_1$ and $H_2$, where $s$ is a prime different from $p$. Furthermore, $H_1$ and $H_2$ are conjugate of an $A$-invariant Sylow $s$-subgroup $K$.

Note that, by Lemma \ref{casesoluorder}, the order of $ \gamma_{\infty}(PK) $ is $ m $-bounded because $ PK $ is $ A $-invariant and soluble. We also have $ [P,K]=[P,K,K] $ and consequently $ [P,K]\leq \gamma_{\infty}(PK) $. Since $ H_1 $ and $ H_2 $ are conjugate to $ K, $ it follows that both $ [P,H_1] $ and $ [P,H_2] $ have  $ m $-bounded order.

Let $ H=\left<H_1, H_2\right> $. Thus $ G=PH $. Since $ G=G' $ we  deduce that $ G=[P,H]H $ because $[P,H]H $ is normal and the quotient $G/[P,H]H$ is  a $p$-group. Furthermore, $ [P,G]=[P,H] $. Indeed, we have $$[P,G]=[P,[P,H]H]=[P,[P,H]][P,H].$$  Since $ [P,H]$ is normal in $P $ and $ G=PH $, the subgroup $ [P,H] $ is also normal in $ G $ and so $ [[P,H],P]\leq [P,H]. $

Observe that $[P,H]=[P,H_1][P,H_2]$. Thus the order of $[P,H]$ is $m$-bounded. Passing to the quotient $G/[P,G]$ we can assume that $P=Z(G)$. In view of  Lemma \ref{order-simpleCase}, we know   that  $ G/Z(G) $ has $ m $-bounded order. By quantitative version of Schur's Theorem (see, for example, \cite[p. 102]{Rob2})  the order of  $ G'=G $ is $ m $-bounded as well.

Now we deal with the case where $ \lambda(G)=1 $ and $ G\neq G' $. Let $ G^{(d)} $ be the last term of the derived series of $ G $. The argument in the previous paragraph shows that $ G^{(d)} $ has  $ m $-bounded order. Hence, the order of $ \gamma_{\infty}(G) $ is   $ m $-bounded since $ G/G^{(d)} $ is soluble. This proves the theorem in the particular case where $ \lambda(G)\leq 1. $

Assume that $ \lambda(G)\geq 2.  $ Let $ T $ be a characteristic subgroup of $ G $ such that $ \lambda(T)=\lambda(G)-1 $ and $ \lambda(G/T)=1. $ By induction, the order of $ \gamma_{\infty}(T) $ is $ m$-bounded. Since  $ \lambda(G/\gamma_{\infty}(T))=1$,  the order of the nilpotent residual of  $ G/\gamma_{\infty}(T) $ is  $ m $-bounded too. Consequently, we  conclude that $ \gamma_{\infty}(G) $ has   $ m$-bounded order. This completes  the proof.
\end{proof}

\section{Proof of Theorem \ref{q2finiteposto}}
In this section we  prove Theorem \ref{q2finiteposto}. Similarly to what was done in the previous section,  the first step in the proof  deals with the case of soluble groups.
\begin{lemma}\label{casesolurank}
    Theorem \ref{q2finiteposto} holds if $G$ is a  soluble group.
\end{lemma}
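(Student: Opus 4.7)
The plan is to emulate the proof of Lemma~\ref{casesoluorder}, swapping in Theorem~\ref{KStheorems}(2), Lemma~\ref{[P,g]}(2), and Lemma~\ref{|[U,V]|}(2) for their order-version counterparts, and handling the absence of a rank-analogue of Lemma~\ref{mbounded} through a direct additive estimate inside the Sylow subgroups of $F(G)$. I would induct on the Fitting height $h(G)$. The cases $h(G)\leq 1$ are immediate, and the inductive step $h(G)>2$ will follow Lemma~\ref{casesoluorder} verbatim: take $N=F_2(G)$, which is $A$-invariant and inherits the hypothesis (Engel sinks inside $N$ are contained in those inside $G$, and Engel sinks also pass to quotients); apply the base case to $N$ to bound ${\bf r}(\gamma_\infty(N))$; then apply the inductive hypothesis to $G/\gamma_\infty(N)$, which has Fitting height $h(G)-1$. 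Combining the two rank bounds via Sylow-wise subadditivity and Kov\'acs's Lemma~\ref{Kovacs} then yields the desired bound on ${\bf r}(\gamma_\infty(G))$.

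The bulk of the work is the base case $h(G)=2$. Here Lemma~\ref{gama_inf} writes $\gamma_\infty(G)=\prod_p[F_p,H_{p'}]$ as an internal direct product of subgroups of pairwise coprime orders, so ${\bf r}(\gamma_\infty(G))=\max_p {\bf r}([F_p,H_{p'}])$, and it is enough to bound each factor uniformly in $p$. Fix $p$ and pass to the faithful coprime action of $\overline{H}_{p'}=H_{p'}/C_{H_{p'}}(F_p)$ on $F_p$. Since $A$ is non-cyclic abelian, $\overline{H}_{p'}$ is generated by the centralizers $C_{\overline{H}_{p'}}(a)$ with $a\in A^{\#}$, and Lemma~\ref{fact1} gives
\[
[F_p,\overline{H}_{p'}] \;=\; \prod_{a}[F_p,C_{\overline{H}_{p'}}(a)],
\]
where the product is indexed by the $q+1$ cyclic subgroups of $A$.

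For each $\bar h\in C_{\overline{H}_{p'}}(a)$, I would lift to $h\in C_{H_{p'}}(a)\subseteq C_G(a)$; Lemma~\ref{[P,g]}(2) then yields $[F_p,\bar h]=[F_p,h]\leq\langle\E(h)\rangle$, of rank at most $m$. Applying Lemma~\ref{|[U,V]|}(2) with $P=F_p$ and automorphism group $C_{\overline{H}_{p'}}(a)$ makes ${\bf r}([F_p,C_{\overline{H}_{p'}}(a)])$ an $m$-bounded quantity. Since rank is subadditive on products of normal subgroups of a finite $p$-group (from $d(H)\leq d(H\cap M)+d(H/(H\cap M))$ applied inside $F_p$), I conclude that ${\bf r}([F_p,\overline{H}_{p'}])$ is bounded by $(q+1)$ times an $m$-bounded quantity, finishing the base case. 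The main obstacle is precisely this last step: without a rank-analogue of Lemma~\ref{mbounded}, the contributions of the $q+1$ centralizers must be combined additively inside $F_p$, and this is what forces the dependence on $q$ in Theorem~\ref{q2finiteposto}.
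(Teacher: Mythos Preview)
Your outline is essentially the same as the paper's proof, and the base case and inductive step are handled correctly. One point deserves explicit mention, however: you write that you will ``emulate the proof of Lemma~\ref{casesoluorder}, swapping in Theorem~\ref{KStheorems}(2)'' and then induct on $h(G)$, but the induction only yields an $\{m,q\}$-bound if $h(G)$ itself is $\{m,q\}$-bounded. In the order version this is automatic, since a bound on $|\gamma_\infty(C_G(a))|$ trivially bounds $h(C_G(a))$ and then Theorem~\ref{Turull} finishes. In the rank version the implication ``${\bf r}(\gamma_\infty(C_G(a)))$ bounded $\Rightarrow$ $h(C_G(a))$ bounded'' is not formal; the paper invokes the Khukhro--Mazurov result \cite[Lemma~8]{KM} that a finite soluble group of rank $r$ has $r$-bounded Fitting height. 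You should make this step explicit, since without it the swap is incomplete and the induction does not close.

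Apart from this, the differences from the paper are cosmetic: you pass to $\overline{H}_{p'}=H_{p'}/C_{H_{p'}}(F_p)$ before applying Lemma~\ref{|[U,V]|}(2) (the paper works with $H_{p'}$ directly, which is harmless since the kernel does not affect $[F_p,\,\cdot\,]$), and you combine the Sylow pieces via the direct-product/max argument rather than Lemma~\ref{Kovacs} (indeed $\gamma_\infty(G)\leq F(G)$ is nilpotent here, so Kov\'acs is not needed). Your identification of the additive $(q+1)$-factor as the source of the $q$-dependence is exactly the point the paper is making.
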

\begin{proof}
Assume that $G$ is soluble.
By Theorem \ref{KStheorems}(2) the rank of  $\gamma_{\infty}(C_G(a))$ is $m$-bounded for each $a\in A^{\#}$.  In \cite[Lemma 8]{KM} it was proved that a finite soluble group of rank $r$ has $r$-bounded Fitting height. Hence we deduce that $h(\gamma_{\infty}(C_G(a))) $ is $m$-bounded and so, in particular,   $C_G(a)$ has $m$-bounded Fitting height for any $a\in A^{\#}$. Now Theorem \ref{Turull} implies that $ h(G) $ is $m$-bounded. The lemma now can be proved by induction on $ h(G)$. If $h(G)=1$, then the result is obvious. 

 Next, we are going to establish that if $h(G)=2$, then the rank of $\gamma_\infty(G)$ is $ \{m,q\}$-bounded.  By Lemma \ref{gama_inf},  we have  $\gamma_{\infty}(G)=\prod_p[F_p,H_{p'}] $, where $ F_p $ is a Sylow $ p $-subgroup of $ F(G) $, while $ H_{p'} $ is a Hall $ p' $-subgroup of $ G$ that can be chosen $A$-invariant  and the product is taken over all prime divisors of $ |F(G)| $.       It follows from Lemma \ref{[P,g]}(2) that $[F_p, h]$ has $ m$-bounded rank for any $h\in C_{H_{p'}}(a)$. Hence, by Lemma \ref{|[U,V]|}(2), we deduce that  ${\bf r} ([F_p,C_{H_{p'}}(a)]) $ is $m$-bounded for each $a\in A^{\#}$.  Since $H_{p'}=\langle C_{H_{p'}}(a) \mid a\in A^{\#}\rangle$, Lemma \ref{fact1} tells us that $$[F_p,H_{p'}]=\prod_{a\in A^{\#}}[F_p,C_{H_{p'}}(a)].$$ Therefore the rank of  $ [F_p,H_{p'}] $ is $ \{m,q\} $-bounded.   By Lemma \ref{Kovacs} we conclude that $ \gamma_{\infty}(G) $ has $ \{m,q\} $-bounded rank, as well.

Finally, suppose that $h(G)>2$ and let $N = F_2(G)$. 
 By induction the rank of $\gamma_{\infty}(G)/\gamma_{\infty}(N)$ is $\{m,q\}$-bounded. The result follows since ${\bf r}(\gamma_{\infty}(G))\leq {\bf r}(\gamma_{\infty}(G)/\gamma_{\infty}(N))+ {\bf r}(\gamma_{\infty}(N))$.
\end{proof}

Now we  consider the special case where $G$ is a direct   product of  nonabelian simple groups. Most of the  argument follows the scheme as in the proof of Lemma \ref{order-simpleCase}.

\begin{lemma}\label{rk-simpleCase} Let $G$ and $A$ be as in Theorem \ref{q2finiteposto}. Assume also that $ G=S_1\times\cdots\times S_t$ is a direct product of $ t $ nonabelian simple groups $ S_i $. Then $ t $ is  $ \{m,q\} $-bounded and the rank of each $S_i$ is $m$-bounded.
\end{lemma}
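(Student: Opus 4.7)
The plan is to adapt the argument of Lemma~\ref{order-simpleCase} to the rank setting, replacing Theorem~\ref{KStheorems}(1) by Theorem~\ref{KStheorems}(2) and, crucially, upgrading the slogan ``$\gamma_\infty$ is nontrivial'' to ``$\gamma_\infty$ contains a nonabelian simple subgroup'', so that the Feit--Thompson theorem can be used to turn a count of direct factors into a lower bound on rank.

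First I would bound $\mathbf{r}(S_i)$ by $m$. If $S_i$ is $A$-invariant, then since a coprime group of automorphisms of a finite simple group is cyclic (\cite[Lemma 2.7]{PS-Gu}) and $A$ is not, the kernel of $A\to\operatorname{Aut}(S_i)$ contains some $a\in A^{\#}$, giving $S_i\le C_G(a)$; as $S_i$ is perfect and normal in $C_G(a)$, we have $S_i\le \gamma_\infty(C_G(a))$, and Theorem~\ref{KStheorems}(2) applied to $C_G(a)$ bounds $\mathbf{r}(S_i)$ in terms of $m$. If $S_i$ is not $A$-invariant, I pick $a\in A^{\#}$ with $S_i^a\ne S_i$ and set $T=S_i\times S_i^a\times\cdots\times S_i^{a^{q-1}}$; the diagonal $C_T(a)\cong S_i$ is a perfect direct factor of $C_{K_j}(a)\le C_G(a)$ (where $K_j$ is the $A$-orbit product containing $S_i$), hence normal in $C_G(a)$, so the same bound on $\mathbf{r}(S_i)$ applies.

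Next I would decompose $G=K_1\times\cdots\times K_s$ into minimal normal $A$-invariant subgroups; each $K_j$ is an $A$-orbit of simple factors of length at most $q^2$, so $t\le q^2 s$ and it suffices to bound $s$. The key step is to exhibit, for each $j$, an element $a_j\in A^{\#}$ such that $\gamma_\infty(C_{K_j}(a_j))$ contains a nonabelian simple subgroup $D_j$. If the simple components of $K_j$ form an $A$-orbit of length greater than $1$, I choose $a_j\in A^{\#}$ outside the setwise stabiliser of a component; then $\langle a_j\rangle$ has a $q$-orbit on the components, and the corresponding diagonal $D_j$ is isomorphic to the simple type of $K_j$, is a perfect direct factor of $C_{K_j}(a_j)$, and therefore lies in $\gamma_\infty(C_{K_j}(a_j))$. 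Otherwise $K_j$ is itself simple and, since $A$ cannot act faithfully on it, any $a_j$ in the nontrivial kernel of $A\to\operatorname{Aut}(K_j)$ yields $C_{K_j}(a_j)=K_j=D_j$.

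Finally, as $A$ has exactly $q+1$ cyclic subgroups of order $q$, pigeonholing on $\{\langle a_j\rangle\}_{j=1}^{s}$ produces a subgroup $\langle b\rangle$ with $|\{j:\langle a_j\rangle=\langle b\rangle\}|\ge s/(q+1)$. For these indices the product of the corresponding $D_j$ is a direct product of nonabelian simple groups sitting as a normal subgroup of $C_G(b)$, hence contained in $\gamma_\infty(C_G(b))$; by Feit--Thompson each $D_j$ has an involution, so this product contains an elementary abelian $2$-subgroup whose rank equals the number of factors, forcing that number to be at most $\mathbf{r}(\gamma_\infty(C_G(b)))$. The latter is $m$-bounded by Theorem~\ref{KStheorems}(2), so $s$, and hence $t$, is $\{m,q\}$-bounded. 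The principal obstacle is precisely the construction of the $D_j$: in the order version of Lemma~\ref{order-simpleCase} any nontrivial $\gamma_\infty(C_{K_j}(a_j))$ already contributes a factor of at least $2$ to $|\gamma_\infty(C_G(b))|$, but a direct product of nontrivial cyclic groups of distinct prime orders can have rank $1$, so merely ``nontrivial'' is insufficient and one genuinely needs nonabelian simple pieces to invoke Feit--Thompson.
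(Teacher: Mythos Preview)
Your argument is correct and follows essentially the same route as the paper's proof: bound $\mathbf{r}(S_i)$ via the diagonal trick and Theorem~\ref{KStheorems}(2), then, for each minimal $A$-invariant factor $K_j$, locate a nonabelian simple subgroup inside some $C_{K_j}(a_j)$ and use even order (Feit--Thompson) to convert a rank bound on $\gamma_\infty(C_G(b))$ into a bound on the number of factors. Your explicit pigeonhole over the $q+1$ cyclic subgroups of $A$ is in fact cleaner than the paper's terse conclusion ``$s\le r$'', which tacitly relies on the same idea.
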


\begin{proof} First, we prove that each direct factor $ S_i $ has $m$-bounded rank.  If $ S_i $ is $ A $-invariant, then $ S_i $ is contained in $ C_G(a) $ for some $ a\in A^{\#} $ and so $ S_i\leq \gamma_{\infty}(C_G(a)) $. Hence $ S_i $ has $m$-bounded rank by Theorem  \ref{KStheorems}(2). Suppose now that $ S_i $ is not $ A $-invariant. Choose $ a\in A^{\#} $ such that $ S_i^a\neq S_i. $ Write $ S=S_i\times S_i^a\times \cdots\times S_i^{a^{q-1}} $. Since $ S_i^a\neq S_i $ it is easy to  see that $ C_S(a) $ is exactly the ``diagonal'' subgroup of $ S $, that is, 
	$$C_S(a)=\{(g,g^a,\ldots,g^{a^{q-1}}) \mid g\in S_i\}.$$ 
	Thus, $C_S(a)$ is isomorphic to $ S_i $ and  Theorem \ref{KStheorems}(2) implies that $ S_i $ has $m$-bounded rank, as claimed.
		
	Now, we will show that $ t $ is an $ \{m,q\} $-bounded  number. Write $ G=K_1\times \cdots\times K_s $ where each $ K_i $ is a minimal normal $ A $-invariant subgroup of $ G $. Then each $ K_i $ is a product of at most $ |A| $ simple factors and so $ t\leq |A|s $. Therefore, it is sufficient to bound $ s $.

	Let $ S_j $ be a direct factor of $ K_i $. Let us show that  for every $ i $ there exists $ a\in A^{\#} $ such that $ C_{K_i}(a) $ contains a subgroup isomorphic to some $S_j$. If $ S_j $ is $ A $-invariant, then $ S_j $ is contained in $ C_{K_i}(a) $ for some $ a\in A^{\#} $. Suppose now that $ S_j $ is not $ A $-invariant. Choose $ b\in A^{\#} $ such that $ S_j^b\neq S_j. $ As shown above, the centralizer $ C_S(b) $ is exactly the ``diagonal'' subgroup of $ S=S_j\times S_j^b\times \cdots\times S_j^{b^{q-1}} $. Hence $ C_S(b) $  is isomorphic to $ S_j $ and it is contained in $ C_{K_i}(b) $, as claimed.
	
	Therefore, for every $i$, there exists $a\in A^{\#}$ such that  $ \gamma_{\infty}(C_{K_{i}}(a)) $ has even order, so in particular $ \gamma_{\infty}(C_{K_{i}}(a))\neq 1 $. Note that  ${\bf r}(\gamma_{\infty}(C_G(a))) $ is $m$-bounded by Theorem  \ref{KStheorems}(2), for any $a\in A^{\#}$. Let $r$ be the maximal value of ${\bf r}(\gamma_{\infty}(C_G(a))) $  when $a$ ranges through $A$, that is an $\{m,q\}$-bounded number.  Since  $\gamma_{\infty}(C_G(a))= \gamma_{\infty}(C_{K_1}(a))\times \cdots \times \gamma_{\infty}(C_{K_{s}}(a))$,    it follows that $s\leq r$, so it is $m$-bounded as desired.
\end{proof} 

We are now ready to  prove Theorem \ref{q2finiteposto} in the general case.
\begin{proof}[Proof of Theorem \ref{q2finiteposto}] In view of  Lemma \ref{casesolurank} we can assume that $ G $ is not soluble. By Theorem \ref{KStheorems}(2), the rank of $\gamma_{\infty}(C_G(a))$ is $m$-bounded for any  nontrivial element $a$ in $A$. 
Now \cite[Lemma 8]{KM} tells us that each soluble subgroup of $ \gamma_{\infty}(C_G(a)) $ has $ m $-bounded Fitting height. It was established in \cite[Corollary 1.2]{KS-non-p-soluble}  that the nonsoluble length  of a finite group does not exceed the maximum Fitting height of its soluble subgroups.  Therefore $\lambda(\gamma_{\infty}(C_G(a)))$ is $m$-bounded, and so $\lambda(C_G(a))$ is $m$-bounded as well. Thus Theorem \ref{KS-nonsoluble} guarantees that $\lambda(G)$ is $m$-bounded, and we can use induction on $\lambda(G)$.

First, assume that $\lambda(G)=1$ and $G=G'$. Let $R(G)$ be the soluble radical of $G$. It follows that $G/R(G)$ is a product of nonabelian simple groups.  Since $R(G)$ is an $A$-invariant soluble group, Lemma \ref{casesolurank} ensures that {\bf r}$(\gamma_{\infty}(R(G)))$ is $\{m,q\}$-bounded. Note that if the nilpotent residual of $G/\gamma_{\infty}(R(G))$ has $\{m,q\}$-bounded rank, then $ \gamma_{\infty}(G) $ has $\{m,q\}$-bounded rank too. Thus, without loss of generality, we assume that $\gamma_{\infty}(R(G))=1$ and  $R(G)=F(G)$. 

Next we will show that the rank of $[F(G),G]$ is $\{m,q\}$-bounded. Since $F(G)$ is nilpotent we have $$[F(G),G]=[P_1,G]\cdots [P_l,G],$$ 
where the subgroups $P_i$ are Sylow $p_i$-subgroups of $F(G)$. In view of Lemma \ref{Kovacs} it is enough to show that the rank of $[P,G]$ is $\{m,q\}$-bounded  for each Sylow $p$-subgroup $P$ of $F(G)$. Note that, considering the quotient of $G$ by the Hall $p'$-subgroup of $F(G)$ and taking into account that the soluble radical of $G$ is nilpotent, we can assume that $F(G)=P$ is a $p$-group.

Since $\lambda(G)=1$ and $G=G'$, the quotient $G/P$ is a direct product of nonabelian simple groups. By Lemma \ref{rk-simpleCase} $ G/P $  has $ \{m,q\} $-boundedly many  simple factors. Write $G=G_1G_2\ldots G_t$, where $t$ is  $\{m,q\}$-bounded and $G_i/P$ is a minimal normal $ A $-invariant subgroup of $G/P$. Obviously $[P,G]=\prod_i [P,G_i]$. Thus it is sufficient to show that the rank of $[P,G_i]$ is $\{m,q\}$-bounded. Hence, without loss of generality, we can assume that $G=G_1$, that is, $G/P$ is a product of isomorphic nonabelian simple groups transitively permuted by $A$. By Lemma \ref{simple2generator} the quotient  $ G/P $ is generated by the image of two Sylow $ s $-subgroup $ H_1 $ and $ H_2 $, where $ s $ is a prime different from $ p $. Furthermore,  $ H_1 $ and $ H_2 $ are conjugate of an $ A $-invariant Sylow $ s $-subgroup $K$.
	
Observe that, by Lemma \ref{casesolurank}, {\bf r}$ (\gamma_{\infty}(PK)) $ is $ \{m,q\} $-bounded because $ PK $ is $ A $-invariant and soluble. We also have $ [P,K]=[P,K,K] $ and consequently $ [P,K]\leq \gamma_{\infty}(PK) $. Since $ H_1 $ and $ H_2 $ are conjugate to $ K, $ it follows that both $ [P,H_1] $ and $ [P,H_2] $ have  $ \{m,q\} $-bounded rank.
	
Let $ H=\left<H_1, H_2\right> $. Thus $ G=PH $. Since $ G=G' $ we can deduce that $ G=[P,H]H $ because $[P,H]H $ is normal and the quotient $G/[P,H]H$ is  a $p$-group. Furthermore, $ [P,G]=[P,H] $. Indeed, we have $$[P,G]=[P,[P,H]H]=[P,[P,H]][P,H].$$  Since $ [P,H]$ is normal in $P $ and $ G=PH $, the subgroup $ [P,H] $ is also normal in $ G $ and so $ [[P,H],P]\leq [P,H]. $
	
Note that $ [P,H]=[P,H_1][P,H_2] $. Thus the rank of $ [P,H] $ is $ \{m,q\} $-bounded. Passing to the quotient $ G/[P,G] $ we can assume that $ P=Z(G) $. In view of Lemma \ref{rk-simpleCase}, we  know that $ G/Z(G) $ has  $ \{m,q\} $-bounded rank. By a theorem of Lubotzky and Mann \cite{luma} (see also \cite[Theorem 1.1]{P-Kur})  the rank of $ G' $ is $ \{m,q\} $-bounded as well. Since $ G=G' $ we conclude that the rank of $ G $ is $ \{m,q\} $-bounded. 
	
Now we deal with the case where $ \lambda(G)=1 $ and $ G\neq G' $. Let $ G^{(d)} $ be the last term of the derived series of $ G $. The argument in the previous paragraph shows that $ G^{(d)} $ has  $ \{m,q\} $-bounded rank. Hence, in virtue of  Lemma \ref{casesolurank}, the rank of $ \gamma_{\infty}(G) $ is  $ \{m,q\} $-bounded since $ G/G^{(d)} $ is soluble. This proves the theorem in the particular case where $ \lambda(G)\leq 1. $
	
Assume that $ \lambda(G)\geq 2.  $ Let $ T $ be a characteristic subgroup of $ G $ such that $ \lambda(T)=\lambda(G)-1 $ and $ \lambda(G/T)=1. $ By induction, the rank of $ \gamma_{\infty}(T) $ is $ \{m,q\} $-bounded. Since  $ \lambda(G/\gamma_{\infty}(T))=1$,  the rank of the nilpotent residual of  $ G/\gamma_{\infty}(T) $ is  $ \{m,q\} $-bounded too. Consequently, we  conclude that $ \gamma_{\infty}(G) $ has   $ \{m,q\} $-bounded rank. This completes  the proof.
\end{proof}

 \end{document}